\documentclass[twoside,11pt]{article}

\RequirePackage{xr}
\RequirePackage[OT1]{fontenc}
\RequirePackage{amsthm,amsmath,amssymb,dsfont,bm}
\RequirePackage{natbib}
\RequirePackage{algorithm}
\RequirePackage{algpseudocode}
\RequirePackage{graphicx} 
\RequirePackage[section]{placeins}
\RequirePackage{color}
\RequirePackage{chngcntr}
\RequirePackage{enumerate}
\usepackage[shortlabels]{enumitem}
\RequirePackage{longtable}
\usepackage{multirow}
\usepackage{mathtools}
\usepackage[english]{babel} 
\usepackage{xcolor}
\usepackage{dutchcal}
\usepackage{mathrsfs}
\usepackage{tikz}
\usetikzlibrary{arrows.meta,positioning,calc}

\RequirePackage[OT1]{fontenc} \RequirePackage{amsthm}
\RequirePackage{amsfonts} \RequirePackage{amssymb}
\RequirePackage{bbm}

\usepackage[normalem]{ulem}

\RequirePackage{graphicx}
\RequirePackage{verbatim}
\usepackage{url}

\numberwithin{equation}{section}

\let\manuscriptproof\proof
\let\endmanuscriptproof\endproof

\usepackage[preprint]{jmlr2e}

\let\proof\manuscriptproof
\let\endproof\endmanuscriptproof
\DeclareUnicodeCharacter{211D}{\ensuremath{\mathbb{R}}}
\makeatletter

\expandafter\let\csname c@definition\endcsname\relax

\expandafter\let\csname c@theorem\endcsname\relax

\expandafter\let\csname c@corollary\endcsname\relax

\expandafter\let\csname c@example\endcsname\relax

\expandafter\let\csname c@remark\endcsname\relax

\expandafter\let\csname c@lemma\endcsname\relax
\makeatother

\theoremstyle{plain}
\newtheorem{definition}{Definition}[section]
\newtheorem{theorem}{Theorem}[section]
\newtheorem{corollary}{Corollary}[section]
\newtheorem{example}{Example}[section]
\newtheorem{remark}{Remark}[section]
\newtheorem{lemma}{Lemma}[section]

\def\@bysame#1{\vrule height 1.5pt depth -1pt width 3em \hskip
0.5em\relax}

\newcommand{\N}{ \mathbb{N} }

\newcommand{\R}{ \mathbb{R} }

\newcommand{\calE}{\mathcal{E}}
\newcommand{\calF}{\mathcal{F}}
\newcommand{\calG}{\mathcal{G}}

\newcommand{\calM}{\mathcal{M}}
\newcommand{\calN}{\mathcal{N}}
\newcommand{\calS}{\mathcal{S}}

\newcommand{\calX}{\mathcal{X}}

\newcommand{\calZ}{\mathcal{Z}}

\newcommand{\eins}{{\bm 1}}

\newcommand{\matS}{{\bm S}}

\newcommand{\vecnull}{{\bm 0}}

\newcommand{\vecx}{{\bm x}}

\newcommand{\Var}{{\mbox{Var\,}}}
\newcommand{\Cov}{{\mbox{Cov\,}}}

\newcommand{\one}{ \mathbb{I} }

\usepackage{subfigure}
\usepackage{mathtools} 
\usepackage{bm}
\usepackage{dsfont}
\usepackage{color}
\usepackage{amsthm}

\usepackage{float}                 

\begin{document}

\title{Context-Adaptive Thresholding for Conditionally Representative Monitoring and Classification}
\author{\name Ansgar Steland \email steland@stochastik.rwth-aachen.de\\
\addr Institute of Statistics and AI Center,\\
RWTH Aachen University,\\
Aachen, Germany, \\ Date: \today}
\ShortHeadings{Context-Adaptive Thresholding}{Steland}
\firstpageno{1}
\maketitle

\begin{abstract}
Commonly, classifiers and monitoring procedures  are trained from  labeled data by optimizing an objective such as the misclassification rate. This may lead to unrepresentative conditional distributions of the outcome (the labels) given important external variables, different from the conditional laws in the population. We show how to modify any given threshold-type classifier resp. monitoring rule to achieve representative conditional label prediction by using adapting the threshold to a covariate $Z$ (the context) to distribute sensitivity while maintaining the false alarm rate. In case that the alarm event is unknown, this approach also allows to (approximately) infer the event in terms of a thresholding rule. The approach is implemented by a computationally cheap nonparametric estimation procedure, and its properties are studied in terms of nonasymptotic error bounds and  asymptotic distribution theory including empirical process theory. These results allow to construct uniform confidence bands, functional hypothesis tests and change-detection procedures. For the well known FICOS credit scoring example, often used in interpretable machine learning, threshold adaptation leads to an easily interpretable decision rule which can compete with state of the art methods including transformers, in terms of common classification metrics. 
\end{abstract}

\begin{keywords} Anomaly detection; calibration; classification; empirical process; explainable machine learning;  monitoring; nonparametric estimation; threshold adaptation
\end{keywords} 

\section{Introduction}
	
	The determination of an optimal threshold for a continuously observed feature is a pervasive challenge in diverse fields of inference. In areas such as  monitoring of processes or anomaly detection, such a feature $X$ is observed and a binary signal is generated by comparing $X$ against a fixed threshold $c$. The event of interest, often interpreted as indicative for an abnormal condition, is then defined as $\{X  > c\} $. A primary objective in this context is to select $c$ such that the alarm probability, $P( X > c )$, remains below a pre-specified level under baseline (normal) operating conditions thus ensuring a controlled false-alarm rate. This typically involves estimating $c$ from an initial learning sample, $X_1, \ldots, X_n$, drawn from the nominal distribution, before applying the derived rule to a sequential data stream $X_{n+1}, X_{n+2}, \ldots$. The subsequent monitoring usually aims to detect deviations from the baseline distribution, which increase the alarm rate. This framework forms the basis for various statistical process control methods and change-point detection algorithms (e.g., \cite{Shiryaev1963}; \cite{Siegmund1985}; \cite{Basseville1993}; \cite{steland2026online}).
	
	Beyond direct monitoring, thresholding plays a critical role in binary classification and risk prediction. In this domain, the true event of interest, $E$, and perhaps even its indicator  $Y = \mathbf{1}_E$ might not be directly observable, but a predictor variable $X$  correlated with $Y$ is available. For instance, in credit risk assessment, $Y=1$ might signify a loan default, while $X$ represents a credit score. More generally, $ X = g(\xi) $ may be a given classifier based on input features $ \xi $ which we aim to optimize a posteriori. A common predictive strategy is to classify the outcome $ X $ by the rule $\hat{Y} = \mathbf{1}_{\{X > c\}}$, where $c$ is a threshold selected ad hoc, often as $ 1/2$, or learned from the training sample $(Y_1, X_1), \ldots, (Y_n, X_n)$. The selection of $c$ is often driven by criteria such as the minimization of the misclassification error $P(Y \neq \hat{Y})$, \cite{mohammadi2003threshold}, or optimizing other performance metrics like precision, recall, or F1-score, see  \cite{Hastie2009}; \cite{Ripley1996},  \cite{LiptonElkanNara2014}, \cite{NIPS2014_98053046} and \cite{ZengJiangChengDobriban26-fair-classif}, amongst others. Fundamentally, this problem can be viewed as inferring an unobserved event $ E $ by judiciously thresholding an observed, correlated variable $X$.
	
	A more intricate scenario we are interested in arises, when context is given via a  covariate $Z$ whose influence on the event of interest $Y$ is non-negligible, and, consequently, the conditional probability $ \pi(z) = P(Y=1|Z=z) $ is of interest. Here, it is not assumed that $Z$ is a protected variable, but somehow correlated to $Y$. For example, in the context of credit loans, $Z$ could represent the loan amount. It may be desired that the classifier $\hat{Y}$ is {\em fair} and {\em aligned to the ground truth} in the sense that the conditional law of $ \hat{Y} $ given $ Z $ coincides with the ground-truth law $ \pi(z) $. We shall call this notion of fairness {\em representative fairness}. From a forecast perspective this means that the forecast $ \hat{Y}$ is {\em probabilistically calibrated given $Z$}. It ensures that the label distributions across the values of $Z$ do not allow to distinguish the classifier (algorithm) from the population (nature).  Clearly, in this setting a constant threshold $c$ applied globally to $X$ is generally insufficient necessitating a more nuanced approach. 
	
	This paper investigates that novel approach resulting in an interpretable methodology for adapting the threshold with respect to the covariate $Z$. This approach yields easily interpretable rules and can be used to improve the accuracy of decisions.  Our core contributions are as follows. Under mild conditions, there exists a measurable threshold function $c(\cdot)$ such that the resulting threshold rule matches the conditional probability $\pi(z)$. Given a training sample $ (Y_1, Z_1), \ldots, (Y_n, Z_n) $,  a simple and computationally cheap nonparametric sliding-window estimator $\hat{c}_n(\cdot)$ is proposed. Asymptotic properties of $\hat{c}_n(\cdot)$ are established including non-asymptotic error bounds through concentration inequalities,  thus providing performance guarantees for finite sample sizes and allowing to examine the convergence rate. A central limit theorem is provided enabling the construction of  pointwise asymptotic confidence intervals. Going beyond this, we view the proposed estimators as functional estimators and provide some empirical process theory. Specifically,  functional sequential central limit theorems are shown, which allow to construct statistical tests and uniform confidence band for the functional estimator $ \{ \hat{c}_n(z) : z \in K \} $ over suitable subsets $K \subset \R$, and to devise sequential inference such as testing for the presence of a change-point. 
	
	Threshold adaptation of such rules has been introduced by \cite{Steland2024} for discrete $Z$. Motivated by the fact that often a detector should be more sensitive for certain regions of the $z$-sample space, e.g., since they represent risky cases, threshold functions were identified and proposed which guarantee  type I error (false alarm) rate, and simultaneously achieve increased sensitivity for certain classes defined by $Z$.  \cite{steland2026adaptive} extends the theoretical results by allowing for unknown $ \Psi $ of the $ X_i$'s, and studies a nonparametric estimator based on the sample quantile function. Contrary to the intention of these works, in the present paper the threshold function is used to align the conditional alarm rate to a reference population resp. sample.  

	The practical utility of our approach is illustrated through a comprehensive analysis of the FICO credit loan dataset, a well known benchmark in the field of explainable (resp. interpretable) machine learning. Our findings demonstrate that the proposed interpretable rule, which adaptively thresholds the FICO score by a function of the loan amount, achieves an in-sample accuracy of $72.3\%$ and out-of sample $ 72.2\%$. This performance is remarkably competitive with that of state-of-the-art classifiers including transformer networks, which are examined as an additional competitor to the neural networks studied in the literature. These competitors often rely on substantially more involved and less transparent decision rules. Moreover, the accuracy of the proposed Gaussian approximations are examined by a small simulation study.
	
	The rest of this paper is organized as follows. Section 2 provides preliminaries, identifies the ground-truth threshold function and briefly discusses fairness properties of the associated rule. Section 3 introduces the nonparametric estimator $\hat{c}_n(\cdot)$ and presents its key asymptotic properties, including nonasymptotic error bounds,  (functional) central limit theorems and confidence intervals resp. uniform confidence bounds. In Section 4 a local bandwidth selection method is proposed based on a Lepski-type approach.  Applications of the theoretical results are discussed in Section 5. Section 6 details the application to the FICO credit loan dataset and discusses the empirical results.

\section{Preliminaries and method}

\subsection{Conditional calibrated thresholding is always possible}

As explained in the introduction, the goal is to select the threshold function in such a way that the resulting adapted-threshold classifier maintains the conditional alarm probabilities. 

To facilitate the subsequent analysis, we standardize the feature $X$. Let $U = (X - \mu) / \sigma$, where $\mu = E(X)$ and $\sigma^2 = \Var(X)$ represent the marginal mean and variance of $X$, respectively. We assume that $U$ possesses a continuous distribution with a strictly increasing cumulative distribution function (CDF) $\Psi $ with density $ \psi$. The regressor $Z$ is assumed to be independent of $U$ with distribution $P_Z$ on $\mathbb R$ with support $\mathcal Z=\operatorname{supp}(P_Z)$. Indeed, no absolute continuity of $P_Z$ is required. The independence assumption is made for clarity of presentation and can be relaxed, see Remark~\ref{CondDF}.

Let us first consider the case that  the true alarm event $ \{ Y = 1 \} $ coincides with the event $ \{ X > \mu + \sigma c(Z) \} = \{ U > c(Z)\}$. This means, the rule $ U > c(Z) $ is an equivalent reformulation of the true signal event. Specifically, this holds true in a classical monitoring settting, where one sets up the monitoring rule signalling an alarm when $ U > c(Z) $ and thus {\em defines} the alarm indicator $ Y $ by $ \eins_{\{U > c(Z) \}} $. Denote by
\[
	\pi(z) = P(Y_1 = 1| Z=z)
\]
the conditional alarm probability given $ z \in \calZ $, and the true marginal alarm rate by \[ 	\alpha^* = \int \pi(z) \, dP_Z(dz). \]  
  The equation
\[
  1-\Psi(c(z)) = P(U>c(Z) | Z=z ) = \pi(z)
\]
easily leads to the solution
\[
  c_0(z) = \Psi^{-1}(1-\pi(z)), \qquad z \in \calZ.
\]

A key distinguishing feature of our proposed framework is that it does not necessitate the restrictive assumption that the true event $ E = \{Y=1\}$ is defined in terms of a threshold rule based on $X$. Our setting is more general: $Y$ may be an arbitrary binary outcome, whose underlying generative mechanism  is unknown to us. We use $X$ (resp. $U$) as a measurable predictor to infer $Y$ and leverage the threshold function $ c(Z) $ as a kind of degree of freedom. 

The following simple lemma establishes the existence of a ground-truth  threshold function: Under a mild condition, a unique function $c(\cdot)$ can always be found that aligns the conditional probability that $U$ exceeds $ c(Z) $ with the true conditional probability $\pi(z)$.

\begin{lemma} (Alignment Threshold)
\label{TheLemma}
	 Suppose that $ \Psi $ is a continuous and strictly increasing c.d.f.. Then there exists a function $ c : \calZ \to \overline{\R} $ such that 
\[ 
	\pi(z) = P( U_t > c(Z_t) | Z_t = z ),
\] 
$ P_Z $-almost surely, namely
\[
    c(z) = (1-\Psi)^{-1}( \pi(z) )  = \Psi^{-1}(1-\pi(z)), \qquad z \in \calZ.
\]
The resulting rule $ \eins_{\{ X > \mu + \sigma c(Z)\}} $ has the alarm rate $ \alpha = P( X > \mu + \sigma  c(Z) ) = \alpha^* $.
\end{lemma}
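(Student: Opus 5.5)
The plan is to construct $c$ explicitly through the generalized inverse of $\Psi$. I then verify the conditional identity using the independence of $U$ and $Z$, and obtain the marginal alarm rate by integrating against $P_Z$.

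\textbf{Construction of $c$.} Since $\Psi$ is continuous and strictly increasing, it is a bijection from $\R$ onto $(0,1)$. Extend it to $\overline{\R}$ by $\Psi(-\infty)=0$ and $\Psi(\infty)=1$; the extension is a bijection $\overline{\R}\to[0,1]$, and its inverse $\Psi^{-1}:[0,1]\to\overline{\R}$ is strictly increasing and hence Borel measurable. Fix a version of the conditional probability $\pi(z)=P(Y=1\mid Z=z)$ taking values in $[0,1]$, and define
\[
c(z)=\Psi^{-1}(1-\pi(z)).
\]
Then $c$ is measurable as a composition of measurable maps, $c(Z)$ is a well-defined random variable, and $1-\Psi(c(z))=\pi(z)$ holds for every $z$. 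The boundary cases are consistent: $\pi(z)=1$ gives $c(z)=-\infty$ and $P(U>-\infty)=1$, while $\pi(z)=0$ gives $c(z)=+\infty$ and $P(U>+\infty)=0$.

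\textbf{Conditional identity.} Because $U$ is independent of $Z$ and $c$ is measurable, the substitution (freezing) lemma for conditional expectations gives, $P_Z$-almost surely,
\[
P(U>c(Z)\mid Z=z)=P(U>c(z))=1-\Psi(c(z))=\pi(z).
\]
The same argument applies verbatim to $(U_t,Z_t)$, which are distributed as $(U,Z)$. This step is the only point that needs care. Properly one should write $E[\mathbf 1\{U>c(Z)\}\mid Z]=h(Z)$ with $h(z)=P(U>c(z))$. This identity follows from independence together with the Fubini theorem applied to the joint law $P_U\otimes P_Z$. The "almost surely" qualifier in the statement enters only through the choice of versions of $\pi$ and of the conditional probability.

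\textbf{Alarm rate.} Since $\sigma>0$, the event $\{X>\mu+\sigma c(Z)\}$ equals $\{U>c(Z)\}$. By the tower property and the previous step,
\[
\alpha=E\bigl[P(U>c(Z)\mid Z)\bigr]=\int\pi(z)\,dP_Z(z)=\alpha^*.
\]
Finally, $c$ is unique up to $P_Z$-null sets, because $1-\Psi$ is injective on $\overline{\R}$. This is not required by the statement but matches the preceding discussion. I expect no real obstacle here; the only point needing care is the measure-theoretic justification of the substitution step, including the treatment of infinite thresholds.
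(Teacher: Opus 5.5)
Your proposal is correct and follows the paper's route: you define $c(z)=\Psi^{-1}(1-\pi(z))$ and verify directly that $P(U>c(Z)\mid Z=z)=1-\Psi(c(z))=\pi(z)$, using the independence of $U$ and $Z$. You also supply rigor the paper's short proof leaves implicit: the extended-real treatment of $\pi(z)\in\{0,1\}$, the measurability of $c$, the substitution lemma, and the tower-property computation of $\alpha=\alpha^*$, which the paper states but does not derive.
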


\begin{proof} Define $ h(z) = P( U_t > c(Z_t) | Z_t = z ) = 1- \Psi( c(z))$.
	Let $ c(z) = \Psi^{-1}( 1-\pi(z) ) $. Then
	\[
	  P( U_t > c(Z_t) | Z_t = z ) = 1 - \Psi( c(z) ) = 1 - (1-\pi(z)) = \pi(z).
	\]
   By definition of $ c(z) $, $ 1 - \Psi(c(z))  = (1-\Psi)(c(z) ) = \pi(z)$. Hence,  the alternative formula 
  $ c(z) = (1-\Psi)^{-1}(\pi(z)) $ follows.
\end{proof}

\begin{remark}
\label{CondDF}
	If the conditional d.f. $ \Psi_z(x) = P( U_t \le x | Z_t = z ) $, of $ U_t $ given $ Z_t = z $ depends on $z$, then the formula $ c(z) = \Psi_z^{-1}(1-\pi(z)) $ follows. Provided the functions $ \Psi_z(x) $, $ z \in \calZ $, are known to us, the methodology easily carries over. Thus, for simplicity of presentation, we confine our discussion to the case $ \Psi_z = \Psi $.
\end{remark}

The above lemma demonstrates that, for {\em any} given true conditional probability function $\pi(z)$, it is {\em always} possible to define a threshold function $c(z)$ of the feature $Z$ such that the conditional probability of $U$ exceeding this threshold precisely matches $\pi(z)$. This means, the adaptive threshold $c(Z)$ acts as a sufficient "degree of freedom" to achieve perfect calibration of the indicator $\mathbf{1}_{\{U > c(Z)\}}$ with respect to the true conditional probabilities $P(Y=1|Z=z)$.

Crucially, this result holds irrespective of whether the true binary outcome $Y$ is actually generated by a simple thresholding rule on $X$ (or $U$). In many real-world applications, $Y$ may be the result of a complex, unobserved data-generating process that is not directly expressible as $X$ exceeding a threshold. The lemma asserts that, even in such intricate scenarios, we can construct an observable thresholding rule $\mathbf{1}_{\{U > c(Z)\}}$ whose conditional probabilities {\em match} those of $Y$. 

While the lemma guarantees the precise matching of conditional probabilities, it is important to distinguish this from the exact coincidence of individual classification decisions. That is, the labels $\mathbf{1}_{\{U_t > c(Z_t)\}}$ assigned by the thresholding rule may not perfectly align with the observed outcomes $Y_t$, they only match on average. However, if $X$ is a strong predictor of $Y$, one can expect that the  adaptive threshold rule $\mathbf{1}_{\{U > c(Z)\}}$ will exhibit high classification accuracy. Our real data example illustrates this.

\subsection{Threshold estimation and decision rules}

To estimate the threshold function we assume that we are given a learning sample 
\[ (Y_1,U_1,Z_1), \ldots, (Y_n,U_n,Z_n) \] 
of size $n$ distributed as  $ (Y,U,Z) $. Here, it is implicitly  assumed that $ U_t = (X_t - \mu)/ \sigma $ and $ Z_t $ are observable. 

In view of Lemma~\ref{TheLemma}, we can estimate $ c(z) $ by plugging in an estimator of  $ \pi(z) $. Notice that $ \pi(z) $ is a conditional probability given $ Z = z $. We fix some suitably chosen $h>0 $ and estimate 
\[ 
	 \pi(z,h) = P( Y=1 | Z \in [z-h, z+h]) = \frac{\int_{z-h}^{z+h} \pi(z) \, d P_Z(dz)}{ P_Z( (z-h,z+h] )} 
\] 
and in turn $ c(z,h) = \Psi^{-1}(1-\pi(z,h) ) $ by a computationally efficient sliding window averaging procedure. The conditional alarm probability $ \pi(z,h) $ is estimated by 
\[
\hat \pi_n(z,h) 
= \left\{ \begin{array}{ll} \frac{ A_n(z)}{ B_n(z)}, & \qquad  B_n(z) > 0, \\
	0 & \qquad  B_n(z) = 0. \end{array} \right.
\] 
and the threshold function $ c(z) $ by
\[
	\hat c_n( z ) = \Psi^{-1}(1-\hat\pi_n(z,h) )
\]
for each $ z $ with $ B_n(z) > 0 $.   Here,
\begin{align*}
	A_n(z,h) & = \#(1 \le i \le n : Y_i=1, |Z_i-z|\le h) /n, \\
	B_n(z,h) & =  \#(1 \le i \le n:|Z_i-z|\le h)/n,
\end{align*}
whose expectations are 
\begin{align*} 
	p_A(z,h) &= P(Y_1=1,|Z_1-z| \le h ), \\
	p_B(z,h) &= P( |Z_1-z|\le h ),
\end{align*} 
such that $ \pi(z,h) = \frac{p_A(z)}{p_B(z)} $.

If both $ X_t $ and $ Z_t $ are observed, then a generic new observation $ x $ with covariate $ z $ is classfied as suspicious, if
\[
x > \hat{\mu}_n + \hat{\sigma}_n \hat c_n( z ),
\]
where $ \hat{\mu}_n $ is the sample average and $ \hat{\sigma}_n $ the sample standard deviation of the observations $ X_1, \ldots, X_n $. Further, the in-sample observation $ X_i $ is marked suspicious, if
\[
X_i >  \hat{\mu}_n + \hat{\sigma}_n \hat c_n( Z_i ),
\]
for $ 1 \le i \le n $.

Most of our analysis is for fixed (but small) $ h>0 $ and thus holds for arbitrary laws $ P_Z $ of $ Z$. If $z\in\operatorname{supp}(P_Z)$ and $\pi$ is continuous at $z$,
then
\[
\pi(z,h)\longrightarrow\pi(z)
\qquad\text{as }h\downarrow0.
\]
Indeed,
\[
|\pi(z,h)-\pi(z)|
\le
\sup_{\substack{u\in\operatorname{supp}(P_Z)\\|u-z|\le h}}
|\pi(u)-\pi(z)|.
\]
Moreover, if $\pi$ is uniformly continuous on a neighborhood of a compact set $K$, the convergence is uniform over $K$.

\begin{remark}
	Both $ A_n(z,h) $ and $ B_n(z,h) $ can be calculated recursively and thus without access to the  whole underlying data set. For example, $ A_1(z) = \eins_{\{Y_1=1, |Z_1-z| \le h)\}} $ and
	\[ 
	A_n(z) = \left((n-1) A_{n-1}(z) + \eins_{\{ Y_n = 1, |Z_n-z| \le h\}} \right)/n, \qquad n > 1.
	\] 
	Therefore, computing these quantities on a grid of $ z$-values of a fixed size, say, $L$, is feasible for arbitrary sample sizes $n$. In practice, one may use an equidistant grid $ z_l = \min \calZ + (l-1) h $, $ 1 \le l \le L $, with $ L = \lceil (\max \calZ - \min \calZ)/h \rceil +1 $. 
\end{remark}

\begin{remark}
	$ \hat\pi_n(z,h) $ is a Nadaraya-Watson estimator using a rectangular kernel, which is commonly used to estimate conditional means under the asymptotic regime $ h \to 0 $ and $ nh \to \infty $. However, we apply it to binary $ Y_i$ and, as explained below, consider the asymptotc regime $ n \to \infty $ with fixed $h$.  
\end{remark}

\begin{remark}
	For simplicity of presentation, we confine ourselves to a univariate variable $Z$, but the generalization to a random vector is straightforward by replacing $ | \cdot | $ in the above defintions by a vector norm. 
\end{remark}

\subsection{Representative fairness by calibration}
\label{sec:representative-fairness}

As already briefly indicated in the introduction, the property  $P(U > c(Z)|Z=z) = \pi(z)$ can be interpreted as a certain notion of algorithmic  fairness. To give a formal definition denote the underlying probability space on which all random variables are defined by $ (\Omega, \calF, P) $ and $ X $ maps to a measurable space $ (\calX, \calF_\calX) $.

\begin{definition} Let $ Y \in \{ 0,1 \} $ and $ X, Z $ be random variables defined on $ (\Omega, \calF, P) $ and $ \hat{Y} = \eins_{\{ X \in A\}} $, $ A \in \calF_\calX $, be a classifier. The classifier satisfies the property of {\em representative fairness} with respect to {\em matching variable} $ Z $, if $ P_{\hat{Y} | Z} = P_{Y|Z} $.
\end{definition}

This notion of fairness differs from the notions of equalized odds (EOD), equal opportunity (EO) or demographic parity (DP), \cite{PessachShmueli2022}. Especially, representative fairness does not ensure  for a protective variable $Z$ with relevant outcomes $ E^+ $ and $E^-$, say, $ E^+ = \{ Z > 0 \} $ and $ E^- = \{ Z \le 0 \} $,  that the chances are equalized, $ P(\hat  Y = 1| E^+ ) = P(\hat Y = 1 | E^-) $, or that the true resp. false positive rates are equalized. This is in contrast to DP, EOD and RO. DP requires that the conditional label distributions of the classifier given a protective variable, say $A$, are equalized or at least very close. EOD requires that the true positive rate (TPR) and the true negative rate (TNR) of the classifier are equalized (or very close) for given $A$, whereas EO focuses on the TPR, \cite{HardtEtAl2016}. 

Whereas these measures of fairness look at statistical quantities used to evaluate a classifier for different values of a protective variable, representative fairness considers the distribution of the classifier with respect to a matching variable (which usually is not a protective variable) and does not condition on $Y$. It requires that the distribution of the classifier labels is equal to the distribution of the ground-truth labels given the matching variable $Z$. In this way, a clustering of positive (or negative) labels for certain outcomes of $Z$ is avoided. One cannot infer from the label distribution across the range of $Z$ whether the labels are the true ones (from nature) or come from the classifier. 

Representative fairness is a strong property, as it guarantees that the classifier does not change the label distribution within classes of $Z$. But it also implies that a bias contained in $Y$ is transferred to $ \hat{Y} $. Specifically, if $Z$ is finer than a protective variable $A$ in the sense that $ A = k(Z) $ for some function $k$, then the tower property of conditional expectations implies that the conditional law $ Y | A $ is 'copied' to the classifier. This can be problematic, if historical bias is present in the data. But by selecting a (curated) random sample without historical bias and ensuring acceptable fairness measures, one can circumvent this issue. 

\textbf{Low-probability events:} As well known, classifiers tend to have difficulties with respect to fairness in low-probability regions, especially if they are not flexible enough to adapt to sparsely populated regions. Since low-probability regions of $Z$ tend to be low-probability regions of the classifier, as explained below, the proposed threshold adaptation mitigates this effect by 'copying' $ \pi(z) $ into the classifier rule.

To briefly discuss the inheritance of low-probability regions, assume the classifier is given by some (measurable) function $ g( \xi  $) for regressors $ \xi $. A low-probability region  of the $ \xi $-space will be  sparsely represented in a training sample with high probability. This can result in unfair performance, since the objective criterion used to train the classifier may be optimized by ignoring such a region and assigning the label of nearby regions of higher probability. To explain the issue that low-probability regions of the $ Z$-sampling space $ \calZ $ may suffer from a classifier which has issues with low-probability regions of its $\xi $-sampling space $\R^{d} $, let us assume that $ \xi = (Z,\xi') $ with $ \xi' \in \R^{d-1} $ and $ Z \in \calZ \subset \R$. For a set $ D \subset \calZ \times \R^{d-1} $ denote by $ \Pi_1 D = \{ Z \in \calZ : (Z,\tilde \xi') \in D \ \text{for some $ \tilde \xi' \in \R^{d-1}$ } \} $ the projection of $D$ onto the first coordinate. Sets whose projection is a subset of a small probability set of $Z$ are low-probability sets for a classifier $g(\xi) $ taking values in $ [0,1]$. A $ \varepsilon $-probability event $A$ for a random variable $ X $ is a measurable set $A$ such that $ P(X \in A) \le \varepsilon $. We have the following simple result:

\begin{lemma}
	\label{SmallProbEvents}
	Let $ \varepsilon> 0 $ and  $ A $ be a $ \varepsilon $-probability region for $ Z $, i.e. a measurable subset of $ \calZ $ such that $ P(Z \in A) \le \varepsilon $.  Then any measurable set $ B \subset [0,1]$ with $ \Pi_1 g^{-1}(B) \subset A $ is a $ \varepsilon $-probability region for $ g(\xi ) $. 
\end{lemma}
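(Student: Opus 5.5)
The plan is a direct set-inclusion argument followed by monotonicity of probability. We need $P(g(\xi)\in B)\le\varepsilon$. Write $\xi=(Z,\xi')$ as in the setup preceding the lemma. The event $\{g(\xi)\in B\}$ equals $\{\xi\in g^{-1}(B)\}$, and $g^{-1}(B)$ is a measurable subset of $\calZ\times\R^{d-1}$ because $g$ is measurable.

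The key step is the inclusion
\[
\{\xi\in g^{-1}(B)\}\subset\{Z\in \Pi_1 g^{-1}(B)\}.
\]
Take $\omega$ with $\xi(\omega)=(Z(\omega),\xi'(\omega))\in g^{-1}(B)$. Then $\tilde\xi'=\xi'(\omega)$ witnesses that $Z(\omega)$ belongs to the projection, by the very definition of $\Pi_1$.

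Combining this with the hypothesis $\Pi_1 g^{-1}(B)\subset A$ gives $\{g(\xi)\in B\}\subset\{Z\in A\}$. Monotonicity of $P$ then yields
\[
P(g(\xi)\in B)\le P(Z\in A)\le\varepsilon.
\]
This is exactly the claim that $B$ is an $\varepsilon$-probability region for $g(\xi)$.

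The only delicate point is measurability, since the projection of a Borel set need not be Borel. The argument avoids this issue: we never need $P(Z\in\Pi_1 g^{-1}(B))$ to be defined. We only use that $\{Z\in \Pi_1 g^{-1}(B)\}$ is contained in the measurable event $\{Z\in A\}$, and the inclusion chain starts from the measurable event $\{g(\xi)\in B\}$. I would add a sentence making this explicit. Otherwise the proof is routine.
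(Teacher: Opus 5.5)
Your proof is correct and matches the paper's argument: the paper also uses the inclusion chain $\{g(\xi)\in B\}=\{(Z,\xi')\in g^{-1}(B)\}\subset\{Z\in\Pi_1 g^{-1}(B)\}\subset\{Z\in A\}$ followed by monotonicity of $P$. Your measurability remark is a useful clarification that the paper omits, since only the two outer events of the chain need to be measurable.
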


\begin{proof} This follows from
	$
	\{ g(\xi) \in B \} = \{ (Z,\xi')  \in g^{-1}(B)\} \subset \{ Z \in \Pi_1 g^{-1}(B) \} \subset A.
	$
\end{proof}

\subsection{Relation to probabilistic calibration by histogram binning}

The proposed local probability estimator is somewhat related to probabilistic calibration by histogram binning, see \cite{ZadroznyElkan2001,GuoEtAl2017}, in that both methods average binary outcomes over a neighborhood.  But histogram calibration partitions
the range of an existing prediction score, $S \in [0,1]$, and returns a recalibrated probability to ensure approximately $\mathbb{E}[Y\mid S]=S$, whereas our procedure smooths over a contextual variable
and subsequently converts the estimated label frequency into a
context-dependent decision threshold. Its objective is conditional
label-distribution matching, $\mathbb{E}[\widehat Y-Y\mid Z]=0$, rather than probability calibration, $\mathbb{E}[Y\mid S]=S$. If $ Z = S $ and identical disjoint bins are used, the probability estimates coincide, but the distinction still lies in the subsequent threshold construction and its statistical analysis.

\section{Asymptotic theory}

We begin in Section~\ref{Sec:NonAsympBounds} with nonasymptotic uncertainty error bounds for the estimated conditional probability and the induced estimated threshold. Although the former is a quite standard problem, the results provided here seem to be new. Section~\ref{Sec:AsNormality} provides a central limit theorem (CLT) and uncertainty quantification by confidence intervals. In Section~\ref{Sec:EmpProcessTheory} we study the estimators as functional ones. By using empirical process theory weak convergence results are obtained which combined with estimation of the asymptotic covariance structure eventually allow to simulate asymptotic laws.  That theory fixes the bandwidth $h$, since otherwise no non-trivial weak limit exists.

 \subsection{Nonasymptotic error bounds and consistency}
 \label{Sec:NonAsympBounds}

The nonasymptotic high probability concentration bounds, see, e.g., \cite{BBoucheron2013} for background, derived in this section provide uncertainty quantification for any sample size and the convergence rate. They allow to study the case $ h \to 0 $ as well. It turns out that one can make use of Okamoto's exponential inequality for the tail of a binomially distributed random variable $ Y $ with sample size $n$ and success probability $p \in (0,1) $,
\[
	 \tau(p,t) = P(|Y/n - p| \ge t ), \qquad t > 0.
\]
These bounds are sharper than the well known Hoeffding bound $ \tau(p,t) \le 2 \exp(-2nt^2) $, if $ p < 1/8 $. But, since the resulting bounds turn out to be more involved, we provide the results using Hoeffding's inequality, too.

 \begin{theorem} 
\label{NonAsympPi}
 	 Fix $ z \in \calZ $ and $h> 0$  with $ 0 < \pi(z,h) < 1/8 $. 
 	 Then, for any $ t > 0 $ and all $ n \ge 1 $ the following bounds hold with probability at least $ 1 - \delta $:
\begin{itemize}
\item[(i)] Firstly, 
\[
\resizebox{\linewidth}{!}{$\displaystyle | \hat\pi_n(z,h)  -  \pi(z,h)  | \le b_1(t) = 2 \sqrt{\pi(z,h)} \sqrt{- \log\left( 1 - \frac{\log( 2/\delta) }{n \pi(z,h)}  \right)} - \log\left( 1 - \frac{\log( 2/\delta) }{n \pi(z,h)}  \right).$}
\]
\item[(ii)] Secondly, 
\[ | \hat\pi_n(z,h)  -  \pi(z,h)  | \le b_2(t) = 2 \sqrt{\pi(z,h)} t + t^2 \]
with $ t = t(\delta) $ given by
	\[
\resizebox{\linewidth}{!}{$\displaystyle t = \sqrt{ - \log\left(  \frac{1}{p_B(z,h)} \left[  \left\{ \frac{\delta}{2} (1-(1-p_B(z,h))^n) + (1-p_B(z,h))^n \right\}^{1/n} - (1-p_B(z,h)) \right] \right) }.$}
\]
\item[(iii)] Lastly,
\[
	| \hat\pi_n(z,h)  -  \pi(z,h)  | \le b_3(t) = \frac{t}{\sqrt{2}} 
\]
where  $ t = t(\delta) $ is as in (ii).
\end{itemize}
\end{theorem}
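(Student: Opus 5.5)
The plan is to condition on the number of observations falling into the window. Let $N_n = nB_n(z,h) = \#\{i : |Z_i - z| \le h\}$, so that $N_n \sim \mathrm{Bin}(n, p_B(z,h))$. Conditionally on $N_n = m \ge 1$, the pairs with $|Z_i-z|\le h$ are i.i.d.\ draws from the conditional law given $|Z-z|\le h$. Hence $S_m = nA_n(z,h)$ is conditionally $\mathrm{Bin}(m, \pi(z,h))$ and $\hat\pi_n(z,h) = S_m/m$. All three bounds should therefore come from a binomial tail inequality applied conditionally, followed by integration over the law of $N_n$; the event $\{N_n=0\}$ is excluded by conditioning on $\{N_n \ge 1\}$.

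For (ii) and (iii) I would proceed as follows. Write $p=\pi(z,h)$ and $s = |\sqrt{S_m/m}-\sqrt p|$. The algebraic identity
\[
|S_m/m - p| = s\,\bigl(\sqrt{S_m/m}+\sqrt p\bigr) \le s\,(2\sqrt p + s)
\]
shows that $s \le t$ implies $|\hat\pi_n - p| \le 2\sqrt p\, t + t^2 = b_2(t)$. Okamoto's inequality for $\sqrt{S_m/m}$ gives $P(s \ge t \mid N_n=m) \le 2e^{-mt^2}$ in both tails; the lower tail is where the hypothesis $p<1/8$ will be used. Write $q = 1-p_B(z,h)$. Averaging over $N_n$ conditioned on $N_n\ge 1$ and using the binomial moment generating function gives
\[
P(\text{fail}\mid N_n\ge1) \le 2\,\frac{E\bigl[e^{-t^2N_n}\bigr]-q^n}{1-q^n} = 2\,\frac{\bigl(q+p_Be^{-t^2}\bigr)^n-q^n}{1-q^n}.
\]
Setting the right-hand side equal to $\delta$ and solving for $t$ reproduces exactly the displayed $t(\delta)$. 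For (iii) I would repeat the argument with Hoeffding's bound $2e^{-2m u^2}$ in place of Okamoto's. Substituting $u=t/\sqrt2$ gives the same exponent $e^{-mt^2}$, hence the same $t(\delta)$ and the bound $b_3 = t/\sqrt2$.

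For (i) I expect to use the same square-root reduction $|\hat\pi_n-p|\le 2\sqrt p\,s+s^2$, now with $s^2 = -\log\bigl(1-\log(2/\delta)/(n p)\bigr)$. This choice suggests a tail bound of the form $\bigl(1-p(1-e^{-s^2})\bigr)^{n}$, that is, a binomial mixture in which the relevant effective count has parameter $p$ rather than $p_B$. The inequality $\log(1-x)\le -x$ would then convert the requirement $\le \delta/2$ per tail into this explicit $s$; note that this also needs $\log(2/\delta) < np$ for $s$ to be defined. Identifying exactly which Chernoff/Okamoto mixture produces this formula, and tracking the constants (in particular $e^{-mt^2}$ versus $e^{-2mt^2}$ in Okamoto's two tails, and where $p<1/8$ enters), is the step I expect to be the main obstacle. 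I would first try bounding the unconditional moment generating function of $nA_n$ directly instead of conditioning on $N_n$.
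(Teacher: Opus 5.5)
Your treatment of (ii) and (iii) is the paper's argument. Given the window count $N=nB_n(z,h)\sim\mathrm{Bin}(n,p_B(z,h))$, the estimator is a mean of $N$ i.i.d.\ Bernoulli$(\pi(z,h))$ variables. Okamoto's square-root inequalities (resp.\ Hoeffding's, with $t/\sqrt2$) give the conditional tail bound $2e^{-Nt^2}$, and the exact average of $e^{-t^2N}$ on $\{N\ge1\}$ is inverted to $t(\delta)$. Like the paper, you obtain (ii) and (iii) conditionally on $\{N\ge1\}$. One correction: the hypothesis $\pi(z,h)<1/8$ plays no role in the validity of any bound, since Okamoto's lower-tail bound $P(\sqrt{S_m/m}-\sqrt p\le -t)<e^{-mt^2}$ holds for every $p$. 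The paper invokes it only to argue that the Okamoto-type bound is sharper than Hoeffding's.

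The gap is (i), and the route you propose for it cannot work. $nA_n\sim\mathrm{Bin}(n,p_A)$ with $p_A=\pi(z,h)p_B(z,h)$, so it is not the count entering the conditional tail bound, and no count in the problem has law $\mathrm{Bin}(n,\pi(z,h))$. Your reverse-engineering nevertheless reproduces exactly what the paper does. It bounds $P(|\hat\pi_n-\pi(z,h)|>2\sqrt{\pi(z,h)}\,t+t^2)\le 2E e^{-t^2N}$ without conditioning; the event $\{N=0\}$ is absorbed because it carries weight $2(1-p_B)^n$ on the right. It then applies $E e^{-\theta B}=(1-p+pe^{-\theta})^n\le\exp\{-np(1-e^{-\theta})\}$ with $p=\pi(z,h)$, although $N\sim\mathrm{Bin}(n,p_B(z,h))$.

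That substitution is an error, and (i) as printed is false in general. Its right-hand side is of order $\sqrt{\log(2/\delta)/n}$ whatever $p_B(z,h)$ is, while $\hat\pi_n$ averages only about $np_B(z,h)$ Bernoulli variables. For instance, take $\pi(z,h)=0.1$, $\delta=0.1$, $n=10^4$ and $p_B(z,h)=0.01$. Then $b_1\approx0.038$, but given $N\approx100$ the estimator has standard deviation about $0.03$, and $P(|\hat\pi_n-\pi(z,h)|>b_1)\approx0.24>\delta$. Letting $p_B(z,h)\to0$ with $n$ fixed even drives this probability to $1$: then $P(N=0)\to1$, $\hat\pi_n=0$ on $\{N=0\}$, and $b_1<\pi(z,h)$.

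So stop looking for a mixture with parameter $\pi(z,h)$, and finish (i) in its correct form from your own exact expression. You have $E e^{-t^2N}=(1-p_B+p_Be^{-t^2})^n\le\exp\{-np_B(1-e^{-t^2})\}$, and solving $2\exp\{-np_B(1-e^{-t^2})\}=\delta$ gives $b_1$ with $p_B(z,h)$ replacing $\pi(z,h)$ inside both logarithms. This bound is valid whenever $\log(2/\delta)<np_B(z,h)$, and unlike (ii) and (iii) it holds without conditioning on $\{N\ge1\}$.
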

 
Figure~\ref{FigBounds} illustrates that the second and third more involved bounds, which are based on an exact calculation of the moment generating function, $ \varphi_N$, of the random sample size $N = n B_n(z,h)$ and its inversion. 
The  bound $ b_2(t) $ uses Okamoto's inequality, whereas $ b_3(t) $ makes use of Hoeffding's inequality. Both bounds are much tighter than $ b_1(t)$, which avoids exact inversion of $ \varphi_N$ by using an exponential bound. 
 
 \begin{figure}
 	\begin{center}
 	\includegraphics[width=10cm]{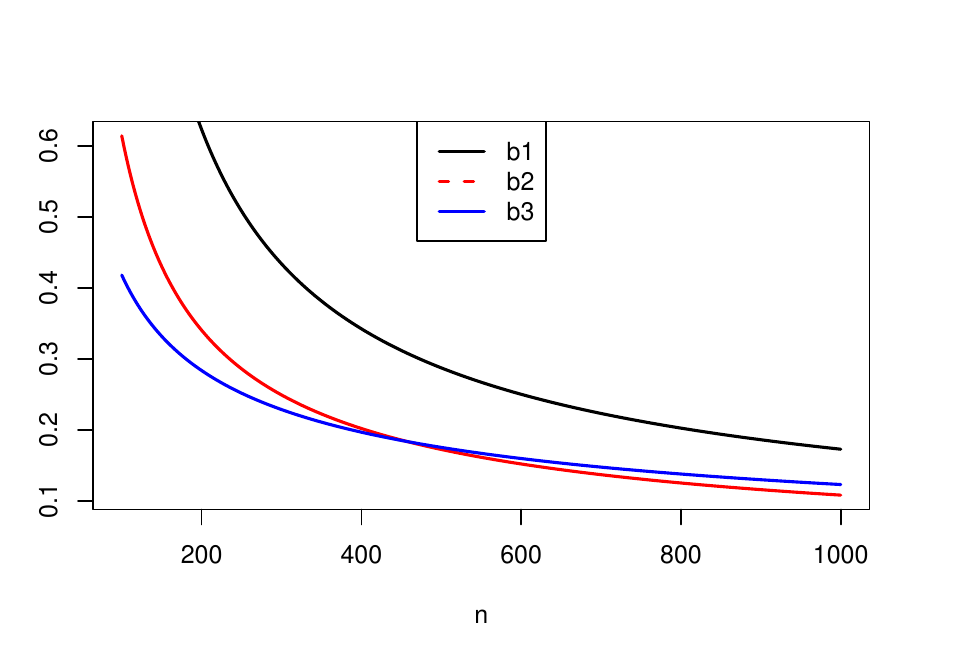}
 	\end{center}
 	\caption{Comparison of Okamoto's and Hoeffding's bound for fixed $p_B(z) = 0.1 $, $ \pi(z,h) = 0.05 $ and $ \delta = 0.1$, as a function of the sample size $ n \in \{ 100, \ldots, 1000\} $.}
 	\label{FigBounds}
 \end{figure}

Reformulating the above concentration bounds provides confidence intervals
\[
  \hat \pi_n(z,h) - b_i( t( \delta) ) \le \pi(z,h) \le \hat \pi_n(z,h) + b_i( t( \delta) ), \qquad i = 1, 2, 3,
\]
with confidence level at least $ 1-\delta $. By monotonicity, these intervals directly translate to confidence intervals
\[
(1-\Psi)^{-1}( \hat \pi_n(z,h) + b_i( t( \delta) ) ) \le c(z,h) \le (1-\Psi)^{-1}( \hat \pi_n(z,h) - b_i( t( \delta) ) ), \qquad i = 1, 2, 3,
\]
for the treshold $ c(z,h) $.

Using $ -\log(1-x) \le x + x^2 $ for $ 0 < x < 1/2 $, the above nonasymptotic bounds yield the high-probability convergence rate $ O( 1/\sqrt{n p_B(z,h)}) $.  Especially,  consistency for $ \pi(z,h) $ holds as long as $ n p_B(z,h) \to \infty $. Pointwise consistency along a sequence $h_n\downarrow0$ requires
\[
n\,p_B(z,h_n)\longrightarrow\infty,
\]
together with a condition ensuring $\pi(z,h_n)\to\pi(z)$, for example continuity of $\pi$ at $z$.

Notice that the  condition $ n p_B(z,h) \to \infty $ includes settings with $ h \to 0 $ at a rate depending on the rate at which $ p_B(z,h) $ tends to $0$ as $ h \to 0 $. In other words, the {\em pointwise anti-concentration of $Z$},  $ p_B(z,h) = P( |Z-z| \le h ) $, and its supremum and infimum,
\[
  a_{P_Z}^+(h) =  \sup_{z \in K} p_B(z,h),  \quad  a_{P_Z}^-(h) =  \inf_{z \in K} p_B(z,h) \qquad h > 0,
\]
naturally determine the convergence rate.  For example, if $ Z_1 \sim N(0,1) $, then $ p_B(z,h) = 2h / \sqrt{2\pi} e^{-z^2/2} (1+ O(h^2)) $, and thus we have the anti-concentration inequalies
\[
a_{\calN(0,1)}^+(h) = \sup_{z \in K} p_B(z,h) \le \frac{2h}{\sqrt{2 \pi}} (1+ O(h^2) ),
\]
and, by compactness of $K$, 
\[
	a_{\calN(0,1)} ^-(h) = \inf_{z \in K} p_B(z,h)  \ge C_1 h (1+ C_2 h^2)
\]
for constants $ C_1,  C_2 $ depending on $K$.  More generally, $ a_{P_Z}^+(h) $ and $ a_{P_Z}^-(h) $ are $O(h) $,  if $ Z_1 $ attains a bounded and positive density on $K$. In such cases, consistency holds under the usual smoothing condition $ n h \to \infty $. 

If we suppose that for some $\beta>0$,
\[
P_Z([z-h,z+h])\asymp h^\beta
\]
at a point $z$ under consideration. consistency holds provided
\[
nh_n^\beta\to\infty.
\]
Here, for an atom $P_Z(\{z\})>0$, the local mass does not tend to zero,
and the effective rate remains of order $n^{-1/2}$.

Specifically, if $ Z_1 $ attains a density $f(x) $ with singularities, then such different convergence arise. For example, consider the density $ f_\gamma(x) = 0.5 (1-\gamma) |x|^{-\gamma} \eins_{0<|x|<1} $, $x \in \R $, for some $ 0 < \gamma < 1 $. Then $ p_B(0,h) = h^{1-\gamma} $, i.e., $ \beta = 1-\gamma$, leading to the condition $ n h^{1-\gamma} \to \infty $.

\subsection{Asymptotic normality and confidence interval}
\label{Sec:AsNormality}

To obtain tight uncertainty intervals central limit theorems are a suitable approach. We confine ourselves to the case that the bandwith $h$ is held fixed and the sample size $n$ approaches $ \infty $, because, firstly, in applications such as anomaly detection the alarm rates $ P(Y_1=1) $ and $ P(Y_1=1|Z=z) $ are often rather  small, and, secondly, we are interested in a weak convergence result of the empirical process associated to the estimators, which prevents $ h \to 0 $, see the discussion in the next section.  For these reasons, we assume that the bandwith $h$ is held fixed and consider the asymptotics as $ n \to \infty $. The CLTs provided here for fixed $z$ can be used to calculate pointwise confidence intervals and also prepares our treatment of the process versions.

Denote $ z_{q} $ the $q$-quantile of the standard normal law. 

\begin{theorem} 
	\label{CLTpi}
	Fix $ z \in \calZ $ with $ \pi(z,h) \in (0,1) $ and $ p_B(z,h) > 0 $.
	\begin{itemize}
		\item[(i)] It holds
		\[ 
		\sqrt{n}( \hat\pi_n(z,h) - \pi(z,h) ) \stackrel{d}{\to} N( 0, \eta^2 ),
		\]
		as $ n \to \infty $, where
		\[
		\eta^2(z,h) = \frac{\pi(z,h)(1-\pi(z,h))}{p_B(z,h)}.
		\]
		The plug-in estimator $ \hat\eta^2_n(z,h) $ obtained by replacing $ p_A(z,h) $ and $ p_B(z,h) $ by $ A_n(z,h) $ and $ B_n(z,h) $ is consistent. 
		\item[(ii)]
		For $ \alpha \in (0,1) $ an asymptotic $(1-\alpha) $-confidence interval of $ \pi(z,h) $ is given by
		\[
		CI_\alpha( \pi)  = \left[  \hat\pi_n(z,h) - z_{1-\alpha/2} \frac{\hat\eta_n}{\sqrt{n}}, \hat\pi_n(z,h) + z_{1-\alpha/2} \frac{\hat\eta_n}{\sqrt{n}}\right].
		\]
	\end{itemize}
\end{theorem}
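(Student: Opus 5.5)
The plan is to treat $\hat\pi_n(z,h)$ as a smooth function of the bivariate sample mean $(A_n(z,h),B_n(z,h))$ and then apply the multivariate CLT followed by the delta method. Write $\xi_i = \eins_{\{Y_i=1,|Z_i-z|\le h\}}$ and $\zeta_i = \eins_{\{|Z_i-z|\le h\}}$. Then $(A_n,B_n) = n^{-1}\sum_{i=1}^n(\xi_i,\zeta_i)$ is an average of i.i.d. bounded vectors with mean $(p_A,p_B)$.

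Since $\xi_i\zeta_i=\xi_i$, $\xi_i^2=\xi_i$ and $\zeta_i^2=\zeta_i$, the covariance matrix is
\[
\Sigma = \begin{pmatrix} p_A(1-p_A) & p_A(1-p_B) \\ p_A(1-p_B) & p_B(1-p_B)\end{pmatrix}.
\]
The classical multivariate CLT therefore gives
\[
\sqrt n\,(A_n-p_A,\,B_n-p_B)\stackrel{d}{\to} N(0,\Sigma).
\]

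Next consider $g(a,b)=a/b$, which is continuously differentiable near $(p_A,p_B)$ because $p_B>0$. Its gradient there is $\nabla g=(1/p_B,\,-p_A/p_B^2)$. The estimator $\hat\pi_n$ equals $g(A_n,B_n)$ only on the event $\{B_n>0\}$. By the strong law of large numbers $B_n\to p_B>0$ almost surely, so $P(B_n=0)=(1-p_B)^n\to0$. The discrepancy on the complementary event is therefore $o_P(1)$ after multiplying by $\sqrt n$, since it vanishes with probability tending to one.

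The delta method then yields asymptotic normality with variance $\nabla g^\top\Sigma\nabla g$. Writing $\pi=p_A/p_B$, the three terms are
\[
\frac{p_A(1-p_A)}{p_B^2}\;-\;\frac{2p_A^2(1-p_B)}{p_B^3}\;+\;\frac{p_A^2(1-p_B)}{p_B^3}.
\]
Collecting them gives
\[
\frac{p_A}{p_B^2}-\frac{p_A^2}{p_B^3}=\frac{\pi(1-\pi)}{p_B}=\eta^2(z,h).
\]
This algebraic simplification is the only step that requires care; in particular it relies on the identity $\xi_i\zeta_i=\xi_i$ to get the off-diagonal entry of $\Sigma$. An alternative check is to condition on $N=nB_n$, under which the number of successes is Binomial$(N,\pi)$; this gives the same variance heuristically.

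For consistency of $\hat\eta_n^2$, note that $A_n\to p_A$ and $B_n\to p_B$ almost surely by the SLLN. The map $(a,b)\mapsto (a/b)(1-a/b)/b$ is continuous at $(p_A,p_B)$, so the continuous mapping theorem gives $\hat\eta_n^2\to\eta^2$ almost surely.

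For part (ii), note that $\eta^2>0$ because $\pi(z,h)\in(0,1)$. Slutsky's lemma then gives
\[
\sqrt n(\hat\pi_n-\pi)/\hat\eta_n\stackrel{d}{\to}N(0,1),
\]
so $P(\pi(z,h)\in CI_\alpha(\pi))\to 1-\alpha$.
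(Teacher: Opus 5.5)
Your proposal is correct and follows essentially the same route as the paper. Both arguments use a bivariate CLT for $(A_n,B_n)$ with the same covariance matrix (the paper obtains it via Cram\'er--Wold), the delta method for $(a,b)\mapsto a/b$ with the same simplification of the variance to $\pi(1-\pi)/p_B$, continuous mapping for $\hat\eta_n^2$, and Slutsky's lemma for the interval; your explicit handling of the event $\{B_n=0\}$ and of $\eta^2>0$ in part (ii) fills in small details the paper leaves implicit.
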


The CLT for $ \hat{c}_n(z,h) $ and an associated confidence interval follows by an additional Taylor expansion.

\begin{corollary} Fix $ z \in \calZ $ with $ \pi(z,h) \in (0,1) $, $ p_B(z,h) > 0 $ and  $ \psi( \Psi^{-1}(1-\pi(z,h))) > 0$. 
	\begin{itemize}
		\item[(i)]  We have
		\[
		  \sqrt{n}( \hat c_n(z,h) - c(z,h) ) \stackrel{d}{\to} N(0, \zeta^2 ),
		\]
		as $ n \to \infty $, where $ \zeta^2(z,h) =  \frac{\eta^2(z,h)}{\psi^2(\Psi^{-1}(1-\pi(z,h)))} $.  \\ The plug-in estimator $ \hat\zeta_n^2(z,h) =  \frac{\hat\eta_n^2(z,h)}{\psi^2(\Psi^{-1}(1-\hat\pi_n(z,h)))}$ is consistent. 
		\item[(ii)] For $ \alpha \in (0,1)$ an asymptotic $ (1-\alpha )$-confidence interval of $ c(z,h) $ is given by
		\[
		CI_\alpha( c ) =   \left[ \hat{c}_n(z,h) - z_{1-\alpha/2} \frac{\hat \zeta_n}{ \sqrt{n}}, \hat{c}_n(z,h) + z_{1-\alpha/2} \frac{\hat \zeta_n}{ \sqrt{n}} \right].
		\]
	\end{itemize}
\end{corollary}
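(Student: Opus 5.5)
The plan is to obtain the corollary from Theorem~\ref{CLTpi} by the delta method applied to the map
\[
g(p) = \Psi^{-1}(1-p) = (1-\Psi)^{-1}(p),
\]
so that $\hat c_n(z,h) = g(\hat\pi_n(z,h))$ and $c(z,h) = g(\pi(z,h))$. Since $\Psi$ is continuous and strictly increasing with density $\psi$, the inverse function theorem gives differentiability of $\Psi^{-1}$ at $1-\pi(z,h)\in(0,1)$ as soon as $\psi$ is continuous at $\Psi^{-1}(1-\pi(z,h))$ and positive there. Positivity is exactly the assumption $\psi(\Psi^{-1}(1-\pi(z,h)))>0$. This yields
\[
g'(\pi(z,h)) = -\frac{1}{\psi(\Psi^{-1}(1-\pi(z,h)))}.
\]
Continuity of $\psi$ near this point I would take as implicit in the standing assumptions. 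If it is not available, it suffices that $\Psi$ be differentiable at that point with the given derivative.

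\emph{Step 1.} Handle the event $\{B_n(z,h)=0\}$, on which $\hat\pi_n$ is set to $0$ and $\hat c_n$ may be infinite. Since $B_n(z,h)\to p_B(z,h)>0$ a.s., this event has probability $(1-p_B)^n\to0$, so it does not affect distributional limits. The same argument shows $\hat\pi_n(z,h)\in(0,1)$ with probability tending to one, because $\hat\pi_n\to\pi(z,h)\in(0,1)$ in probability by Theorem~\ref{CLTpi}(i).

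\emph{Step 2.} Write
\[
\sqrt n\,(\hat c_n - c) = g'(\xi_n)\,\sqrt n\,(\hat\pi_n-\pi),
\]
with $\xi_n$ between $\hat\pi_n$ and $\pi$, or equivalently use the first-order Taylor expansion with a remainder $o_P(1)$. Then apply Slutsky's lemma together with Theorem~\ref{CLTpi}(i). This gives the limit
\[
N\bigl(0,\ g'(\pi)^2\eta^2\bigr) = N(0,\zeta^2),
\]
and the sign of $g'$ plays no role.

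\emph{Step 3.} Show consistency of $\hat\zeta_n^2$. By Theorem~\ref{CLTpi}(i), $\hat\eta_n^2\to\eta^2$ in probability. Also $\hat\pi_n\to\pi$ in probability, and the map $p\mapsto\psi(\Psi^{-1}(1-p))$ is continuous at $\pi(z,h)$ with a nonzero value. The continuous mapping theorem therefore gives $\hat\zeta_n^2\to\zeta^2$ in probability.

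\emph{Step 4.} Derive part (ii). Combining (i) with Slutsky gives
\[
\sqrt n\,(\hat c_n-c)/\hat\zeta_n \stackrel{d}{\to} N(0,1),
\]
since $\zeta>0$. Inverting the event $\{|\cdot|\le z_{1-\alpha/2}\}$ then produces $CI_\alpha(c)$ with asymptotic coverage $1-\alpha$.

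The main obstacle is the regularity needed for the delta method. The statement only assumes a density $\psi$ with $\psi>0$ at the relevant point. Differentiability of $\Psi^{-1}$ at that point follows from differentiability of $\Psi$ with a positive derivative there, which is enough for Step 2. For Step 3, however, continuity of $\psi\circ\Psi^{-1}$ at $1-\pi(z,h)$ is needed, and I would state this explicitly as a mild assumption, namely that $\psi$ is continuous near $c(z,h)$.
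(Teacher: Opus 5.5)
Your proposal is correct and follows the paper's route. The paper obtains the corollary from Theorem~\ref{CLTpi} by a Taylor expansion (delta method) of $q(r)=\Psi^{-1}(1-r)$ at $\pi(z,h)$, where $q'(\pi(z,h)) = -1/\psi(\Psi^{-1}(1-\pi(z,h)))$, and then applies Slutsky's lemma and continuous mapping to handle $\hat\zeta_n^2$ and the interval. Your remark that continuity of $\psi$ at $c(z,h)$ is tacitly required is a legitimate sharpening of the paper's one-line argument: it is needed both for the plug-in consistency and for $\psi$ to be the derivative of $\Psi$ at that point.
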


\subsection{Empirical process theory}
\label{Sec:EmpProcessTheory}

Interpreting the estimators $ \hat \pi_n(\cdot,h) $ and $ \hat c_n(\cdot,h) $ as functional estimators on a suitably chosen domain $ K \subset \R $ and thus studying them as empirical processes is more intricate. But, from a theoretical perspective, it is interesting in its own right, and it allows for functional inference. We discuss some applications in the next section. 

Before proceeding, it is crucial to notice that  $ \hat \pi_n(z, h) $ behaves as a kernel density estimator for which it is known that the associated empirical process cannot converge weakly to a non-degenerate process under the  asymptotic regime $ h \to 0 $ and $ nh \to \infty $. Indeed, although the finite dimensional distributions do converge to non-degenerate multivariate Gaussian distributions, any weak limit process needs to be almost everywhere $0$, see \cite{Nishiyama01032011} for weak convergence in $L_p$ spaces and \cite{Stupfler2016} for the space $ l^\infty  $ of bounded functions equipped with the supnorm used here. The degeneracy of any weak limit carries over to our setting, since the numerator $ A_n(z,h)  $ of $ \hat \pi_n(z,h) $ is a Nadaraya-Watson type estimator. The following impossibility result generalizes \cite{Nishiyama01032011} and is proved in Appendix A.

\begin{theorem} 
\label{IP-impossible}
	Let $ (X_i,Y_i) \sim f $, $ i \ge 1 $, be an i.i.d. sequence with $ E(Y_1^2) < \infty $.  Define $ \hat m_n(x,h) = \frac{1}{nh} \sum_{i=1}^n \eins_{\{ | x-X_i | \le  h \}} Y_i$ and $m_n(z,h) = E( \hat{m}_n(x,h) )$. If $ M_n(x,h) = \sqrt{nh}( \hat m_n(\cdot,h) - m_n(\cdot,h) ) $ converges weakly   in $ L_2(\R) $ to some random process $ G(\cdot) $, as $ h \to 0 $ and $ nh \to \infty $, then
	$ G = 0$ in $ L_2(\R)$. 
\end{theorem}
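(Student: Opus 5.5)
The plan follows the classical argument for kernel density estimators (Nishiyama): weak convergence in $L_2(\R)$ together with vanishing covariances at distinct points forces the limit to be degenerate. The crux is to show that $E\|M_n\|_{L_2}^2$ stays bounded while, for every fixed pair of test functions, the covariance of the projections of $M_n$ tends to zero.

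Let $F_n = \langle M_n, \varphi\rangle = \int M_n(x,h)\varphi(x)\,dx$ for $\varphi \in L_2(\R)$. By the continuous mapping theorem, $\langle M_n,\varphi\rangle \stackrel{d}{\to} \langle G,\varphi\rangle$. I would use this only for $\varphi$ in a countable dense set $\{\varphi_k\}$, say indicators of rational intervals or a fixed orthonormal basis. If each $\langle G,\varphi_k\rangle=0$ almost surely, then $G=0$ almost surely in $L_2(\R)$.

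To compute $\langle M_n,\varphi\rangle$, write
\[
\langle M_n,\varphi\rangle = \sqrt{nh}\,\frac{1}{nh}\sum_{i=1}^n \bigl(\xi_i - E\xi_i\bigr),
\qquad
\xi_i = Y_i \int \eins_{\{|x-X_i|\le h\}}\varphi(x)\,dx = Y_i\,(2h)\,(\varphi * k_h)(X_i),
\]
where $k_h = \eins_{[-h,h]}/(2h)$. Then
\[
\Var\langle M_n,\varphi\rangle = \frac{1}{h}\Var(\xi_1) \le \frac{4h^2}{h}\,E\bigl[Y_1^2\,(\varphi*k_h)(X_1)^2\bigr].
\]
Now $\varphi*k_h \to \varphi$ in $L_2$. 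Take $\varphi$ bounded with compact support, for example an indicator of an interval, which suffices by density. Then $|\varphi*k_h|\le\|\varphi\|_\infty$, so $\Var \le 4h\|\varphi\|_\infty^2 E(Y_1^2)\to 0$. Hence $\langle M_n,\varphi\rangle \to 0$ in $L_2(P)$, and therefore in probability. Since weak limits are unique, $\langle G,\varphi\rangle = 0$ almost surely for each such $\varphi$.

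Closing the argument needs a countable family of such $\varphi$ whose span is dense in $L_2(\R)$; rational-interval indicators work. If $\langle G,\varphi_k\rangle=0$ for all $k$ almost surely, then $G=0$ in $L_2$. Note that this step does not need tightness or a uniform second-moment bound on $\|M_n\|_{L_2}$. Weak convergence is assumed, so only the identification of the limit through its finite-dimensional linear functionals is required. That the functionals $x\mapsto\langle x,\varphi_k\rangle$, $k\ge1$, determine the Borel law on separable $L_2$ is standard.

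The main obstacle is checking that the stated normalization makes the variance vanish and does not diverge. The pointwise variance of $M_n(x,h)$ is of order one, but integrating against $\varphi$ yields an extra factor $h$. This is exactly the averaging effect that kills the limit. Boundedness of $\varphi$ avoids any density assumption on $X$, and $E(Y_1^2)<\infty$ is precisely what is used.
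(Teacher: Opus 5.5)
Your proposal is correct and is essentially the paper's Nishiyama-type argument. You project $M_n$ onto bounded test functions from a countable family with dense span, use $\bigl|\int \eins_{\{|x-X_i|\le h\}}\varphi(x)\,dx\bigr|\le 2h\|\varphi\|_\infty$ to get $\Var\langle M_n,\varphi\rangle=O(h)$, and conclude $\langle G,\varphi\rangle=0$ almost surely. The only differences are minor: the paper uses the uniformly bounded weighted Hermite functions rather than rational-interval indicators and finishes with a basis-truncation step, whereas your direct conclusion that $G$ is orthogonal to a dense set on a probability-one event is a slightly cleaner finish.
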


To circumvent this issue, we study weak convergence  for $ n \to \infty $ and fixed $ h$.  Recall at this point that a class  $ \calF $ 
of functions $ \R \times \{0,1 \} \to \R $ is called a GC (Glivenko-Cantelli) class, if
\[
  (P_n-P)(f) = \frac{1}{n} \sum_{i=1}^n [ f(Y_i,Z_i) - E f(Y_1,Z_1)], \qquad f \in \calF,
\]
satisfies the uniform strong law of large numbers, i.e.
\[
  \lim_{n \to \infty} \sup_{f \in \calF}  |(P_n-P)(f) |= 0, \qquad \text{almost surely.}
\]
Here, $ P_n = \frac{1}{n} \sum_{i=1}^n \delta_{(Y_i,Z_i)} $ is the empirical measure associated to the sample $ (Y_i,Z_i), 1 \le i \le n $,  $ \delta_x $ is the Dirac measure in a point $x$, $ Q(f) :=  \int f \, d Q $ for a probability measure $Q$ on $ (\Omega,\calF) $ and $Q^* $ denotes outer probability. Moreover,  $ \calF $ is called a $ P$-Donsker class, if under $P$ the empirical process $ \sqrt{n}(P_n-P)(f)  $, $ f \in \calF $, satisfies a functional central limit theorem (invariance principle),
\[
  \sqrt{n}(P_n-P)(f) \Rightarrow B(f)
\]
for a centered tight $P$-Brownian bridge process $ \{ B(f) : f \in \calF \} $ with covariance function $ \Cov( B(f), B(g) ) = \int f(x) g(x) \, dP(x) - \int f(x) \, d P(x) \int g(x) \, d P(x) $, $ f, g \in \calF $. Here $ \Rightarrow $ signifies weak convergence in the space $ l^\infty(\calF;\R) $ in the sense of \cite{VaartWellner2023}. 

We are interested in establishing weak convergence of the empirical process
\[
  \mathscr{C}_n(z,h) = \sqrt{n}\left( \hat c_n(z,h) - c(z,h) \right), \qquad z \in K,
\]
and its sequential generalization
\[
  \mathscr{C}_n(t,z,h) = \sqrt{n} \left( \hat c_{\lfloor nt \rfloor} (z,h) - c(z,h) \right), \qquad z \in K, t \in [t_0,1],
\]
for some $ t_0 \in (0,1) $ and suitably chosen compact index sets $ K \subset \R $.  This will follow, under suitable assumptions, easily from the weak convergence of 
\[
  \mathscr{P}_n(z,h) = \sqrt{n} \left(  \hat \pi_n(z,h) - \pi(z,h) \right), \qquad z \in K,
\]
and
\[
  \mathscr{P}_n(t,z,h) = \sqrt{n} \left(  \hat \pi_{\lfloor nt \rfloor }(z,h) - \pi(z,h) \right), \qquad z \in K, t \in [t_0, 1], 
\]
respectively. The sequential processes are indexed by $ [t_0, 1] \times  K $ and thus weak convergence is then studied in the space $ l^\infty( [t_0,1] \times K; \R ) $.

\begin{theorem} 
\label{DonskerCLT}
	\begin{itemize}
		\item[(i)]
	Let $ K \subset \R $ be a compact set with $ \pi(z,h) \in (0,1) $ and $ 0 < \inf_{z \in K}  p_B(z,h) $. Then 
	\[
	  \mathscr{P}_n(\cdot,h)  \Rightarrow  \mathscr{P}(\cdot, h), \qquad n \to \infty,
	\]
	in $ \ell^\infty(K) $, 
	for some tight centered Gaussian process $ \mathscr{P} = \{ \mathscr{P}(z,h) : z \in K \}$ with covariance function
	\[
\resizebox{\linewidth}{!}{$\displaystyle \eta(u,v,h) =	\frac{1}{p_B(u,h)p_B(v,h)} \left( (1-\pi(u,h) - \pi(v,h)) \gamma_2(u,v,h)  + \pi(u,h) \pi(v,h) \gamma_1(u,v,h) \right),$}
\]
	 where 
	\begin{align*}
		\gamma_1(u,v,h) &= P(|Z_1-u|\le h, |Z_1-v|\le h), \\
		\gamma_2(u,v,h) &= P( |Z_1-u|\le h, |Z_1-v|\le h, Y_1=1),
	\end{align*}
	for $ u, v \in K $. More generally, the sequential empirical process converges weakly,
	\[
	  \{ \mathscr{P}_n(t,z,h) : t \in [t_0,1], z \in K \} \Rightarrow \{ \mathscr{P}(t,z,h) : t \in [t_0,1], z \in K \},
	\]
	as $ n \to \infty $, in $ \ell^\infty([t_0,1]\times K) $, for some tight centered Gaussian process $\mathscr{P}$ indexed by $ [t_0,1] \times K $ with covariance function given by
	\[
	  \Cov( \mathscr{P}(s,u,h), \mathscr{P}(t,v,h) ) = \frac{s \wedge t}{st} \eta(u,v,h) , 
	\]
	for $ s, t \in [t_0,1] $ and $ u, v \in K $.
	
	\item[(ii)] Let $K \subset \R $ compact and assume that the following conditions are satisfied.
		\begin{itemize} 
		\item[(a)] $ 0 <a \le \inf_{z \in K} \pi(z,h) < \sup_{z \in K} \pi(z,h) \le b < 1 $.
		\item[(b)]  The functions $ p_B(z,h) $ and $ p_A(z,h) $, $ z \in K $, are Lipschitz continuous,
		\item[(c)] $ q(r) = \Psi^{-1}(1-r) $ is twice differentiable on $ [a,b] $ 
	\end{itemize} 
	Then
	\[
	  \mathscr{C}_n(\cdot, h) \Rightarrow \mathscr{C}( \cdot, h), \qquad n \to \infty,  
	\]
	 in $ l^\infty(K) $, for some tight centered Gaussian process $ \mathscr{C} = \{ \mathscr{C}(z,h) : z \in K \} $ with covariance function
	\[
	\zeta(u,v,h) = \frac{\eta(u,v,h)}{\psi(\Psi^{-1}(1-\pi(u,h))\psi(\Psi^{-1}(1-\pi(v,h))}, 
	\]	
	for $ u, v \in \R $.  More generally, the sequential empirical process converges weakly
	\[
	  \{ \mathscr{C}_n(t,z,h) : t \in [t_0,1], z \in K \} \Rightarrow \{ \mathscr{C}(t,z,h) : t \in [t_0,1], z \in K \},
	\]
	as $ n \to \infty $, in $ \ell^\infty( [t_0,1] \times K) $ to some centered tight Gaussian process $ \mathscr{C} $ with covariance function given by
	\[
		\Cov( \mathscr{C}(s,u,h), \mathscr{C}(t,v,h) ) = \frac{s \wedge t}{st} \zeta(u,v,h), 
	\]
	for $ s, t \in [t_0,1] $ and $ u, v \in K $.
\end{itemize}
\end{theorem}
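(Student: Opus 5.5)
The plan is to treat $\hat\pi_n(\cdot,h)$ as a smooth functional of two empirical processes indexed by simple function classes, and then apply the functional delta method twice. First for the ratio map $(A,B)\mapsto A/B$, and second for the quantile transform $q(r)=\Psi^{-1}(1-r)$.

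Consider the classes
\[
\calF_A=\{(y,z')\mapsto \eins_{\{y=1\}}\eins_{\{|z'-z|\le h\}} : z\in K\},\qquad \calF_B=\{(y,z')\mapsto \eins_{\{|z'-z|\le h\}} : z\in K\}.
\]
The sets $\{z': |z'-z|\le h\}=[z-h,z+h]$ are intervals, so $\calF_B$ is VC (subgraph) with constant envelope $1$. The class $\calF_A$ is the product of $\calF_B$ with the single fixed indicator $\eins_{\{y=1\}}$, hence also VC. Consequently $\calF_A\cup\calF_B$ is $P$-Donsker and Glivenko--Cantelli. This gives joint weak convergence
\[
\sqrt n\,(A_n-p_A,\;B_n-p_B)\Rightarrow (\mathbb G_A,\mathbb G_B)\quad\text{in }\ell^\infty(K)^2,
\]
with Brownian bridge covariances, for instance $\Cov(\mathbb G_A(u),\mathbb G_B(v))=\gamma_2(u,v,h)-p_A(u,h)p_B(v,h)$.

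For the sequential version, I would use the sequential empirical process $n^{-1/2}\sum_{i\le\lfloor nt\rfloor}(f(Y_i,Z_i)-Pf)$. For a Donsker class this converges in $\ell^\infty([0,1]\times\calF)$ to a Kiefer--Müller process with covariance $(s\wedge t)\,\Cov(f,g)$ (van der Vaart and Wellner, Thm.~2.12.1). Note that $\sqrt n(A_{\lfloor nt\rfloor}-p_A)=\frac{n}{\lfloor nt\rfloor}\,n^{-1/2}\sum_{i\le\lfloor nt\rfloor}(\cdot)$, and $n/\lfloor nt\rfloor\to 1/t$ uniformly on $[t_0,1]$. This is precisely where $t_0>0$ is needed, and it produces the factor $\frac{s\wedge t}{st}$.

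Next, apply the delta method to $\phi(A,B)=A/B$. This map is Hadamard differentiable at $(p_A,p_B)$ as a map $\ell^\infty(K)^2\to\ell^\infty(K)$, provided $\inf_K p_B>0$. Its derivative is
\[
\phi'(\alpha,\beta)=\frac{\alpha-\pi\beta}{p_B}.
\]
The event $\inf_K B_n>0$ has probability tending to one by the Glivenko--Cantelli property, so the convention $\hat\pi_n=0$ on $\{B_n=0\}$ is asymptotically irrelevant. The limit is therefore $\mathscr P=(\mathbb G_A-\pi\,\mathbb G_B)/p_B$. Computing its covariance from the Brownian bridge covariances, all the $p_Ap_B$ cross terms cancel because $p_A=\pi p_B$. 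What remains is
\[
\frac{\gamma_2(u,v,h)-\pi(v,h)\gamma_2(u,v,h)-\pi(u,h)\gamma_2(u,v,h)+\pi(u,h)\pi(v,h)\gamma_1(u,v,h)}{p_B(u,h)p_B(v,h)}.
\]
This matches $\eta(u,v,h)$. In the sequential case the same map is applied pointwise in $t$, and the uniform convergence of $n/\lfloor nt\rfloor$ handles the prefactor.

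For (ii), the next step is the map $r(\cdot)\mapsto q\circ r$ from $\ell^\infty(K)$ to $\ell^\infty(K)$, evaluated at $\pi(\cdot,h)$, whose values lie in $[a,b]$ by (a). Because $q$ is twice differentiable, hence $C^1$ with $q'$ uniformly continuous on a slightly enlarged compact subinterval of $(0,1)$, this composition map is Hadamard differentiable with derivative $g\mapsto q'(\pi)g$. Here $q'(r)=-1/\psi(\Psi^{-1}(1-r))$, so the covariance becomes $\eta/(\psi\psi)$ as claimed. Uniform consistency of $\hat\pi_n$ ensures that its range lies inside the enlarged interval with probability tending to one. The Lipschitz condition (b) makes $\pi(\cdot,h)$ continuous, so the limit process can be taken to have continuous paths and is tight. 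It also lets the remainder of the Taylor expansion be controlled uniformly via $\sup|\hat\pi_n-\pi|^2=O_P(1/n)$.

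I expect the main obstacle to be the careful justification of the sequential statement. This requires joint Hadamard differentiability of the ratio map on $\ell^\infty([t_0,1]\times K)$, uniformly in $t$, and control of the small-$\lfloor nt\rfloor$ effects. My first approach is to write $\hat\pi_{\lfloor nt\rfloor}$ explicitly as $\phi$ applied to the rescaled partial-sum processes, which are jointly Donsker over $[t_0,1]\times K$. Then I would invoke the delta method on the product index set directly, rather than arguing pointwise in $t$.
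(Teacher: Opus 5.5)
Your argument is correct, but it takes a different route from the paper. You go through the functional delta method; the paper never differentiates the ratio map at all.

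\textbf{The paper's route.} It starts from the exact identity $\hat\pi_k(z,h)-\pi(z,h)=B_k(z,h)^{-1}k^{-1}\sum_{i\le k}\eins(|Z_i-z|\le h)\{Y_i-\pi(z,h)\}$. Hence $\sqrt n\{\hat\pi_{\lfloor nt\rfloor}(z,h)-\pi(z,h)\}$ equals $p_B(z,h)/B_{\lfloor nt\rfloor}(z,h)$ times $\frac{n}{\lfloor nt\rfloor}n^{-1/2}\sum_{i\le\lfloor nt\rfloor}f_z(Z_i,Y_i)$, for the single centered influence-function class $\calF=\{f_z:z\in K\}$. The prefactor tends to $1$ uniformly in $(t,z)$ by the Glivenko--Cantelli property of $B$. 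Slutsky's lemma plus the functional Donsker theorem (van der Vaart and Wellner, Th.~2.12.1) for $\calF$ then give (i), and the covariance is simply $P(f_uf_v)$. Part (ii) is a second-order Taylor expansion with bounded $q''$, using $\sup_{t,z}\sqrt n|\hat\pi_{\lfloor nt\rfloor}-\pi|=O_P(1)$ from (i).

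\textbf{Trade-offs.} Your route needs the joint Donsker property of $\calF_A\cup\calF_B$, Hadamard differentiability of $(A,B)\mapsto A/B$ and of $r\mapsto q\circ r$, and the covariance computation in which the $p_Ap_B$ terms cancel. In return it is modular and needs only $q\in C^1$ near the range of $\pi$, rather than a bounded second derivative. It would also carry over directly to bootstrap versions. The paper's route is more elementary. It also yields the explicit asymptotically linear representation with influence function $\xi_i(z,h)$, which is what the change-point application reuses.

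\textbf{The sequential step.} The step you flag as the main obstacle is not really one. Since $p_A,p_B$ do not depend on $t$ and $\inf_K p_B>0$, the ratio map is Hadamard differentiable at $(p_A,p_B)$ on $\ell^\infty([t_0,1]\times K)^2$ by exactly the same argument as on $\ell^\infty(K)^2$. Moreover, $\inf_{t\ge t_0,z\in K}B_{\lfloor nt\rfloor}(z,h)>0$ with probability tending to one.

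\textbf{Two minor points.} First, condition (b) is not what yields tightness of the limit or controls the Taylor remainder. The Donsker property and part (i) do that, and neither proof actually uses (b). Second, your enlargement of $[a,b]$ is genuinely needed when $\pi(\cdot,h)$ attains $a$ or $b$. The paper's Taylor step silently needs the same thing, so (c) should be read as holding on a neighbourhood of $[a,b]$.
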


The covariance functions of $ \mathscr{P}(t,z,h) $ and $ \mathscr{C}(t,z,h) $  can be estimated consistently by plugging in $  \hat{p}_B(z,h), \hat{p}_A(z,h), \hat\pi_n(z,h) $ and the estimators 
\begin{align*}
	\hat\gamma_{n1}(u,v,h) &= \frac{1}{n} \sum_{i=1}^n \eins(|Z_i-u| \le h, |Z_i - v|\le h), \\
	\hat\gamma_{n2}(u,v,h) &= \frac{1}{n} \sum_{i=1}^n \eins(|Z_i-u| \le h, |Z_i - v|\le h, Y_i=1).
\end{align*}
Especially, 
\[
  \hat{\zeta}_n(u,v,h) = \frac{\hat\eta_n(u,v,h)}{\psi(\Psi^{-1}(1-\hat\pi_n(u,h))\psi(\Psi^{-1}(1-\hat\pi_n(v,h))}.
\]

\section{Local bandwidth selection with a bias allowance}
\label{sec:lepski-compact}

As a data-driven feasible appraoch to bandwidth selection, we propose to use a Lepski-type approach, \cite{chagny2016introduction}, where the largest bandwidth among candidates is selected, which is compatible in terms of accuracy with all finer estimators and minimizes the error bound. 

At this point, let us assume that  $ \pi(z) $ is Lipschitz continuous, i.e.,
\[
	|\pi(z)-\pi(z')|\le Ld(z,z'), \qquad z, z' \in \mathcal{Z},
\]
for some Lipschitz constant $L>0$.  Let 
$\mathcal H(z) = \{ h_1 < \cdots < h_J \}$ be a set of $J$ candidate bandwidths and define
\[
	N_h(z)=\{1 \le i \le n:d(Z_i,z)\le h\},\quad m_h(z)=|N_h(z)|,\quad
\widehat\pi_h(z)=\frac1{m_h(z)}\sum_{i\in N_h(z)}Y_i .
\]
We may and will assume that $ m_h > 0 $ for all $h \in \mathcal{H} $ and relevant $z$.  Within neighborhood  $N_h(z) $ the average change of $ \pi( \cdot ) $ is  
\begin{equation}
	B_h(z)=\frac L{m_h(z)}\sum_{i\in N_h(z)}d(Z_i,z)\le Lh,
	\label{eq:compact-radius}
\end{equation}
We use $B_h(z) $ as a bias allowance and add an error bound, $s_h(z) $, due to noise and thus let 
\[
	W_h(z)=s_h(z)+B_h(z).
\]
Alternatively, one use the simplified bias allowance $B_h=Lh$.  To ensure uniform error guarantee across all bandwidths, Hoeffding bound gives 
\[
  s_h=\sqrt{\log(2J/\alpha)/(2m_h)}.
\]
If $\pi(z) \le \pi_\star < 1/8$, the symmetric Okamoto bound applied to the conditional Bernoulli average instead gives 
\begin{equation}
	t_h=\sqrt{\log(2J/\alpha)/m_h},\qquad
	U_h=\min\{p_\star,(t_h+\sqrt{\widehat\pi_h+2t_h^2})^2\},
	\qquad s_h=2\sqrt{U_h}\,t_h+t_h^2.
	\label{eq:compact-okamoto}
\end{equation}

The Lepski method accepts a candidate bandwidth $h$, if the associated estimator is compatible with all finer estimates using smaller bandwidths and not yet increases the bias. Thus, let  
\begin{align}
	\label{acceptance-set}
	\widehat{\mathcal A}_n(z)
	&=\left\{h\in\mathcal H(z):
	|\widehat\pi_h(z)-\widehat\pi_g(z)|
	\le s_h(z)+s_g(z)+B_h(z)+B_g(z)
	\ \text{for all }g\le h\right\},
\end{align}
the set of accepted bandwidth candidates and define 
\begin{align}
	\widehat h_n(z)&=\mathop{\arg\min}_{h\in\widehat{\mathcal A}_n(z)}W_h(z),
	\qquad
\widehat\pi_n(z)=\widehat\pi_n(z, {\widehat h}_n(z)).
\end{align}
$\widehat h_n(z)$ is  \emph{optimal} in the sense that it  minimizes the error bound. Then, by construction, we obtain the following error guarantee.
\begin{theorem}
	\label{Th-Lepski}
	Under the stated assumptions, with probability
	at least $1-\alpha$ at any fixed $z$,
	\begin{equation}
		|\widehat\pi_n(z)-\pi(z)|
		\le W_{{\widehat h}_n(z)}(z)
		=\min_{h\in\widehat{\mathcal A}_n(z)}W_h(z).
		\label{eq:compact-guarantee}
	\end{equation}
\end{theorem}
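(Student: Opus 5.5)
The argument is the standard Lepski/oracle one: define a good event on which all candidate estimators are close to their conditional means, and show that on this event every candidate bandwidth satisfies the error bound. For each $h\in\mathcal H(z)$ let
\[
\bar\pi_h(z)=\frac1{m_h(z)}\sum_{i\in N_h(z)}\pi(Z_i),
\]
the conditional mean of $\widehat\pi_h(z)$ given $Z_1,\ldots,Z_n$. The error then splits as
\[
|\widehat\pi_h(z)-\pi(z)|\le|\widehat\pi_h(z)-\bar\pi_h(z)|+|\bar\pi_h(z)-\pi(z)|.
\]
By the Lipschitz assumption the bias term is at most $\frac{L}{m_h}\sum_{i\in N_h(z)}d(Z_i,z)=B_h(z)$, deterministically given the $Z$'s. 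So it suffices to show that the event
\[
\mathcal E=\bigl\{|\widehat\pi_h(z)-\bar\pi_h(z)|\le s_h(z)\ \text{for all } h\in\mathcal H(z)\bigr\}
\]
has probability at least $1-\alpha$.

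Conditionally on $Z_1,\ldots,Z_n$, the $Y_i$, $i\in N_h(z)$, are independent Bernoulli variables with means $\pi(Z_i)$, because the sample is i.i.d. and each $Y_i$ depends only on $(Z_i,U_i)$. Hoeffding's inequality for independent bounded variables (which does not need identical means) gives
\[
P\bigl(|\widehat\pi_h-\bar\pi_h|>s_h\,\big|\,Z_{1:n}\bigr)\le 2\exp(-2m_h s_h^2)=\alpha/J
\]
with $s_h=\sqrt{\log(2J/\alpha)/(2m_h)}$. A union bound over the $J$ bandwidths, followed by integration over the $Z$'s, yields $P(\mathcal E)\ge1-\alpha$.

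On $\mathcal E$, every $h$ satisfies $|\widehat\pi_h(z)-\pi(z)|\le s_h(z)+B_h(z)=W_h(z)$. This holds in particular for the data-dependent $\widehat h_n(z)$, since it is one of the finitely many candidates and $\mathcal E$ controls all of them simultaneously. The acceptance set is nonempty because $h_1$ is trivially accepted, its only comparison being $g=h_1$ itself. The identity $W_{\widehat h_n}=\min_{h\in\widehat{\mathcal A}_n}W_h$ is just the definition of the argmin. The acceptance condition also holds on $\mathcal E$ for every $h$, by the triangle inequality through $\pi(z)$, so the true oracle bandwidth is never rejected; this is not needed for the stated bound, but it shows the bound is meaningful.

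The Okamoto variant \eqref{eq:compact-okamoto} is where I expect the real work. Here Okamoto's bound has to be applied to a non-identically distributed Bernoulli average, and the unknown variance proxy must be replaced by the data-driven $U_h$. First I would reduce to the bound $|\widehat\pi_h-\bar\pi_h|\le 2\sqrt{\bar\pi_h}\,t_h+t_h^2$ with failure probability $\alpha/J$. This uses an Okamoto/Bernstein-type inequality for sums of independent Bernoullis with mean $\bar\pi_h<1/8$, justified by convexity of the moment generating function in the success probabilities (Hoeffding's 1956 comparison with the binomial). Second, I would show $\sqrt{\bar\pi_h}\le t_h+\sqrt{\widehat\pi_h+2t_h^2}$ on the same event, by solving the quadratic inequality $\bar\pi_h-2\sqrt{\bar\pi_h}t_h-t_h^2\le\widehat\pi_h$ for $\sqrt{\bar\pi_h}$. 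Together with the cap $\bar\pi_h\le\pi_\star$, this gives $\bar\pi_h\le U_h$. Once this is done, the union bound and oracle argument above carry over unchanged.
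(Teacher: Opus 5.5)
Your proof is correct and follows the same route as the paper's. You center at $\bar\pi_h$, obtain simultaneous concentration from Hoeffding (resp.\ Okamoto, with the quadratic inversion giving $\bar\pi_h\le U_h$) plus a union bound over the $J$ bandwidths, bound the bias by the Lipschitz allowance $B_h$, and observe that the resulting bound holds for every candidate, hence for $\widehat h_n(z)$.

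The one difference is in how the concentration step is justified. You condition on all of $Z_1,\ldots,Z_n$ and use Hoeffding for independent, non-identically distributed Bernoullis (with Hoeffding's binomial comparison for the Okamoto case). The paper instead cites Lemma~\ref{Lemma_Conditional_Measure}, which conditions only on the window count and therefore centers at $\pi(z,h)$ rather than at $\bar\pi_h$. Your conditioning is the one that actually matches the centering at $\bar\pi_h$ and the data-dependent allowance $B_h(z)$.
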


\section{Applications: Uniform confidence bands, testing and change detection}

Let us discuss some  applications of the asymptotic results. Uniform confidence bands go beyond pointwise confidence intervals and are required for inference on the functional form of an estimator.

\begin{example} (Uniform confidence bands) \\
	For this purpose, we  use the standardized version
	\[
	\tilde{c}_n(z,h) = \frac{ \sqrt{n}(\hat c_n(z,h) -c(z,h) ) }{ \sqrt{\hat \zeta_n(z,z,h)} }
	\]
	which converges weakly to the Gaussian mean zero process $  \tilde{ \mathscr{C}}(z,h) = \mathscr{C}(z,h) / \sqrt{\zeta(z,h)} $ with unit variance function, under the assumptions of Theorem~\ref{DonskerCLT}.
	One can simulate trajectories of $ \tilde{\mathscr{C}}(z,h) $ and its supremum $ = \sup_{z \in K} | \tilde{\mathscr{C}}(z,h)| $ to obtain a simulated $(1-\alpha) $-quantile $ q_{c,h}(1-\alpha) $  for some  $ \alpha \in (0,1) $. Then a $ (1-\alpha) $-confidence band for $ c(z,h) $ is given by
	\[
	\left[  \hat{c}_n(z,h) - q_{c,h}(1-\alpha) \sqrt{ \frac{\hat \zeta_n(z,z,h)}{ n} }, \hat{c}_n(z,h)  + q_{c,h}(1-\alpha)  \sqrt{ \frac{\hat \zeta_n(z,z,h)}{ n} } \right].
	\] 
	Analogously, a simulated quantile $ q_{\pi,h}(1-\alpha) $ of the the law of $ \sup_{z \in K} | \mathscr{P}(z,h) | / \sqrt{\eta(z,zh)} $ yields a $ (1-\alpha )$ confidence band 
	\[
	\left[  \hat{\pi}_n(z,h) - q_{\pi,h}(1-\alpha) \sqrt{ \frac{\hat \eta_n(z,z,h)}{ n} }, \hat{\pi}_n(z,h)  + q_{\pi,h}(1-\alpha) 
	\sqrt{ \frac{\hat \eta_n(z,z,h)}{ n } } 
	\right].
	\]
	for $ \pi(z,h) $. 
\end{example}

In many applications, there is some reference  or baseline threshold (at least the constant one), so that one might wish to conduct a statistical test whether or not the reference threshold can be assumed.

\begin{example} (Testing a reference threshold function) \\
	To test the null hypothesis 
	\[
	H_0: c(\cdot) = c_0(\cdot) \qquad \text{versus} \qquad H_1: c(\cdot ) \not= c_0( \cdot ) 
	\]	
	for a given reference threshold function $ c_0: K \to [0, \infty)  $, one may use a Kolmogorov-Smirnov type test statistics such as
	\[
	K_n = \sup_{z \in K} \sqrt{n} \frac{| \hat{c}_n(z,h) - c_0(z,h) |}{\sqrt{\hat \zeta_n(z,z,h)}},
	\]
	which converges in law to $  \tilde{\calM}_h  $ under $ H_0$. Thus, the associated asymptotic test rejects $ H_0 $, if
	$ K_n >  q_{\tilde{\calM},h}(1-\alpha)$.  
\end{example}

The simultaneous confidence band can also be inverted to test a
constant-threshold hypothesis. Define
\[
L_n(z)
=
\widehat c_n(z,h)
-
q_{c,h}(1-\alpha)
\sqrt{\frac{\widehat\zeta_n(z,z,h)}{n}},
\qquad
U_n(z)
=
\widehat c_n(z,h)
+
q_{c,h}(1-\alpha)
\sqrt{\frac{\widehat\zeta_n(z,z,h)}{n}}.
\]
For a specified constant $c_0$, the null hypothesis
\[
H_0:c_h(z)=c_0\quad\text{for all }z\in K
\]
is rejected if the horizontal line $z\mapsto c_0$ is not contained
in the band, that is, if
\[
\sup_{z\in K}
\frac{\sqrt n\,|\widehat c_n(z,h)-c_0|}
{\sqrt{\widehat\zeta_n(z,z,h)}}
>
q_{c,h}(1-\alpha).
\]

For the composite null hypothesis
\[
H_0:\text{there exists }\theta\in\mathbb R
\text{ such that }c_h(z)=\theta
\text{ for every }z\in K,
\]
the confidence-band inversion rejects if no horizontal line is
contained in the band. Equivalently, it rejects if
\[
\sup_{z\in K}L_n(z)>\inf_{z\in K}U_n(z).
\]
This procedure has asymptotic size at most $\alpha$ and may be
conservative.

It is also interesting to examine whether one should instead use two different constant thresholds corresponding to smaller resp. larger values of $ Z $ as a simple and interpretable approach. This corresponds to a change-point test.

\begin{example} (Change-point testing) \\
	Suppose that either the distribution $ \Psi $ of $ U_t = (X_t-\mu)/\sigma )$ or the probability $ \pi(z,h) = P( |Z_t-z| \le h ) $ may be affected by a change, so that the threshold function may depend on time $t$. A suitable change-point model is to assume that for some $ 1 \le t^* $
	\[
	c_t(z,h) =  c_0(z,h) \eins_{\{ t_0 \le t \le t^* \}} + c_1(z,h) \eins_{\{t^* < t \}}, 
	\]
	for two functions $ c_0(\cdot, h ) \not= c_1(\cdot,h) $, the pre- and after-change tresholds. If $ t^* <  n $, $t^* $ is the change-point.
	To test the no-change null hypothesis $ H_0 : t^* = \infty $ against the alternative hypothesis $ H_1 : t^* < n $, one can use the maximized split-sample weighted distance
	\[
	T_n = \max_{\lfloor n t_0 \rfloor \le k < n(1-t_0)} \sup_{z \in K} \frac{k}{n} \frac{n-k}{n} \sqrt{n} | \hat c_{k}(z,h) - \hat{c}_{k+1, n} |.
	\]
	$ T_n $ splits the sample at each candidate change-point location $ k \in \{ \lfloor n t_0 \rfloor, \ldots, \lfloor n (1-t_0) \rfloor \} $, compares the   weighted distance between the estimated threshold calculated from the first $k$ observations and the estimator $ \hat{c}_{k+1, n} $ calculated from the remaining $n-k $ observations, and maximizes this quantity over the candidate locations $k$ and $ z \in K $.  An application of \cite[Prop.~4.1]{DehlingEtAl2014} shows that under $ H_0 $ 
	\[
	T_n \stackrel{d}{\to} \sup_{t \in [t_0,1]} \sup_z | \mathscr{K}(t,z,h) - t \mathscr{K}(1,z,h)|, 
	\]
	as $ n \to \infty $, where  $ \mathscr{K}(t,z,h) $ is the Kiefer process associated to $ t \mathscr{C}(t,z,h) $. 
\end{example}

\section{Real data example and simulation experiments}

We use the FICOS credit risk data set to illustrate the proposed method and compare it with known results. We augment the comparison with results from a FT-transformer network with multi-head attentions and sigmoid output, \cite{Gorishniy2021Revisiting}, in order to check whether they perform better than fully-connected feed forward nets examined in \cite{chen2018interpretable}. 

The FICOS data set of home equity line of credit (HELOC)  was used in the 2018 explainable machine learning competition. It consists of interpretable features and a binary target variable indicating whether or not a borrower was 90 days past due  or worse at least once within the first two years of the credit, see \cite{RudinShap2023}.  After deleting missing values, the data set consists of $ n = 9860 $ observations.  The sample is balanced with $ 52\%$ observations corresponding to credit defaults. In our analysis, the Fico score was used as variable $ X $. As discussed in Section~\ref{sec:representative-fairness}, to ensure algorithmic fairness in the sense of a measure different from representative fairness, one should use a curated data set without such biases. However, since this analysis serves illustrative purposes, we take the data as is.

The loan amount is certainly an important variable, and a system deciding on a credit could be considered fair if for a given loan amount, $L$, the decision against a credit is made with a probability close to the true default frequency in a real training sample where the true default indicator is given. By construction, the approach studied in this paper follows this concept of fairness. Specifically, the question arises whether the classification results can be improved by using a threshold function depending on  the loan amount instead of a fixed threshold, e.g., according to the established Fico scheme that regards a score larger than $670$ as {\em good} and scores exceeding $740$ as {\em very good}. The optimal constant threshold turns out to be $ 712 $ and leads to a classification accuracy of $ ACC = 70.9\% $.

Since the loan amount is highly skewed and the window size $w = 2h$ should be proportional to the loan amount, we set $ Z = \log(L) $ (ranging from $ 9.22$ to $14.97$). The bandwith for estimation was selected manually as well as based on the Lepski method. The manual selection uses $ h = 0.2 $ leading to window sizes which are ca. $40\%$ of the loan amount.  Due to sparsity in the tails, the bandwidth was set to $ 0.5 $ for $z$ less than the $10\% $ or above the $ 90\% $ quantile. Ignoring the tails and focusing on the central part, the corresponding estimated default probabilities have roughly a Lipschitz constant of $ \Delta p / \Delta z \approx L = 0.2 $, which was used for the Lepski method.

\textbf{Fixed bandwidth:} Let us first discuss the results for the manually selected  $h  $. Although the resulting rule is very simple and easy to understand as it compares the customers Fico score to a threshold depending on the loan amount, the resulting in-sample accuracy when trained on the whole data is suprisingly high with $ ACC = 72.3 \% $. When using  $75\% $ of the data for estimation and  the remaining $ 25\%$ as a test set for evaluation, the estimated  accuracy in the test set (averaged over $ 10$ runs) was $ 72.2\%$.

Figure~\ref{Fig1} depicts the estimated default probabilities as a function of the loan amount with $95\%$ confidence bands. Figure~\ref{Fig2} shows the estimated threshold function $ \hat{c}_n(z) $ and the commonly used classification scheme for a FICO score between $590$  (smaller values are classified as poor) and $ 800 $ (larger values are excellent). The uniform confidence band is based on simulated trajectories of the limiting process $ \mathscr{C} $ and somewhat wider than the point-wise confidence intervals. It is clearly seen that no constant line is contained in the band, so that non-constancy of the threshold function is significant on the $5\%$ level. The adapted threshold approach, which ensures the conditional default rates from the training sample, requires a better score for small loans compared to large loans. 

\textbf{Lepski's local bandwidth:} When using the Lepski local bandwidth selection method, accuracy ($72,28\%$), true positive rate ($73.97\%$) and true negative rate ($70.45\%$) are almost the same. But  the fit and confidence bands are much smother in the tails by virtue of its local adaptation to sparse data, see Figures~\ref{Fig3} and \ref{Fig4}. Using more irregular adapted thresholds for small and large loan amounts does not really improve the classification results. 

\begin{figure}
	\begin{center}
		\includegraphics[width=10cm]{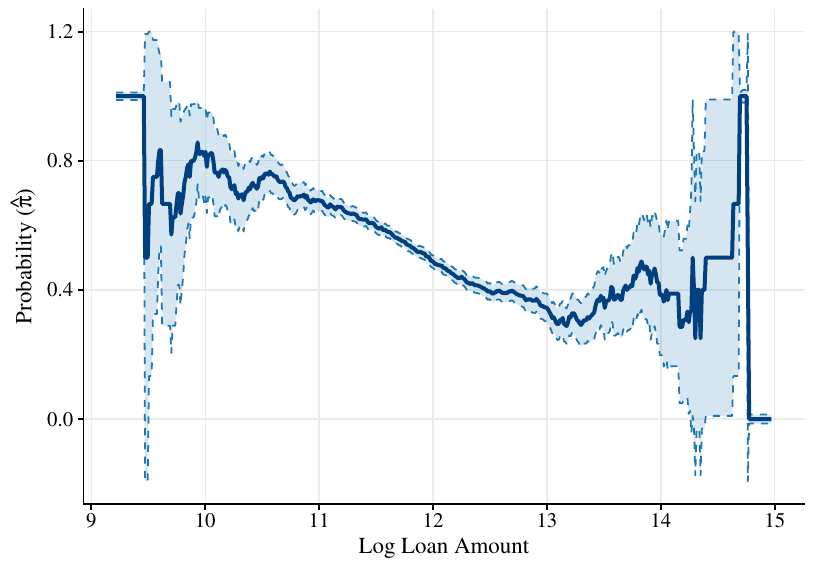}
		\caption{Estimated default probabilities $ \hat{\pi}_n(z) $ given loan amount (bold line) with confidence bands.}
		\label{Fig1} 
	\end{center}
\end{figure}

\begin{figure}
	\begin{center}
		\includegraphics[width=10cm]{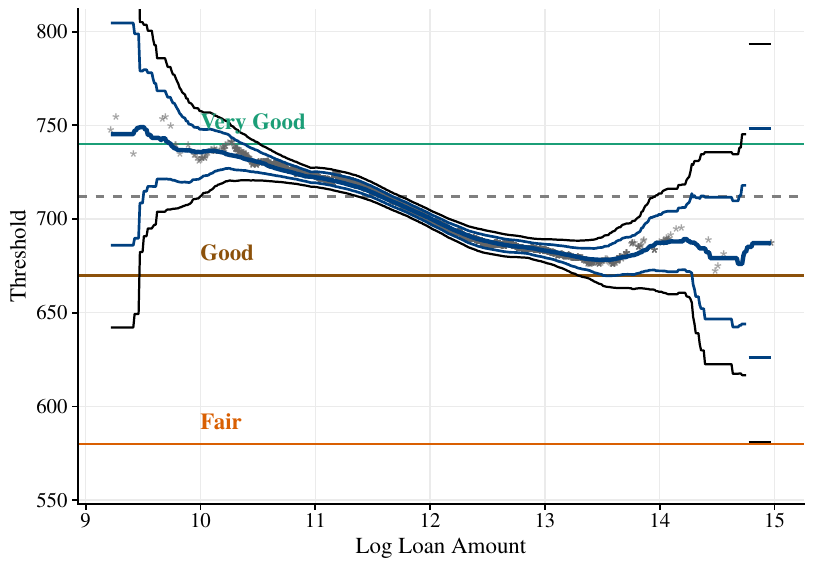}
		\caption{Estimated adapted threshold for the fico score (bold, blue) and optimal constant threshold (dashed), with pointwise confidence intervals (blue) and uniform confidence band (black).}
		\label{Fig2} 
	\end{center}
\end{figure}

\begin{figure}
	\begin{center}
		\includegraphics[width=10cm]{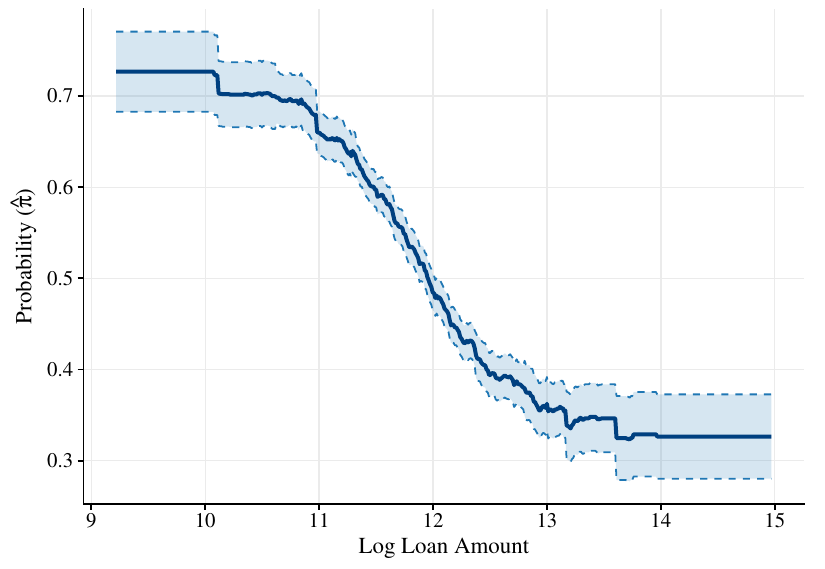}
		\caption{Lepski bandwidth selection: Estimated default probabilities $ \hat{\pi}_n(z) $ given loan amount (bold line) with confidence bands.}
		\label{Fig3} 
	\end{center}
\end{figure}

\begin{figure}
	\begin{center}
		\includegraphics[width=10cm]{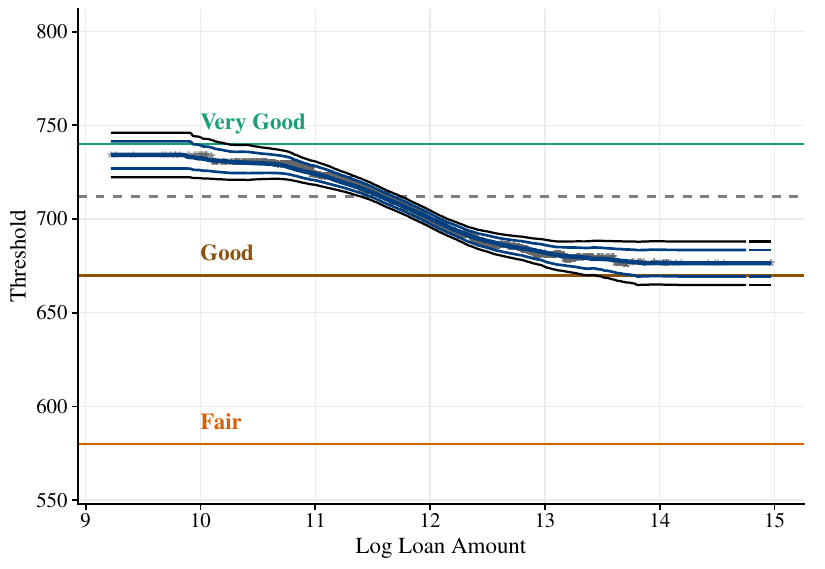}
		\caption{Lepski bandwidth selection: Estimated adapted threshold for the fico score (bold, blue) and optimal constant threshold (dashed), with pointwise confidence intervals (blue) and uniform confidence band (black).}
		\label{Fig4} 
	\end{center}
\end{figure}

\textbf{Comparisons:} 
That accuracy is close to the best methods known from the literature. For example, improves upon a fully connected neural network with eight layers, see \cite{chen2018interpretable}. The dataset was also recently analysed by \cite{RudinShap2023} using several standard machine learning methods (KNN, CART, SVM (lin., RBF, Poly), RF, AdaBoost, Log. Reg.). Averaged over the preprocessing methods used there and trained on $75\% $ of the data, the accurary ranges from $70.7\% $ (KNN) to $72.6\%$  (AdaBoost and Log. Reg.) and is judged as quite similar across methods. Without preprocessing training a logistic regression on the whole data set yields an accuracy of $71.9\% $ (see their tutorial code).

\textbf{Transformer network:} As a more popular method, a transformer network was also used to predict the credit defaults by the fico score $X$ and the log credit amount $Z$ to compare them with the adapted threshold rule of the form $ X > c(Z) $. We also checked other methods including a boosted tree and ogistic regression, but the results were close to those above and are therefore not reported here for brevity. The transformer network consists of pre-layer-normalized multi-head self-attention and GELU feed-forward blocks followed by a normalized linear logit head and a sigmoid to estimate the conditional default probability given $X, Z $. The selected model has width 64, 2 heads, used a dropout of $ 0.1$, batch size $ 256 $ and a weight decay of $ 1e-5$. The average of an esemble of three randomly initialized such networks was used as output. The network is visualized in Appendix~\ref{AppNet}. A cross-validated threshold (yielding $0.5545$ instead of $0.5$) gives $ACC = 71.5\% $, which does not improve upon the interpretable rule obtained the threshold adaptation. 

\textbf{Summary:} These comparisons show that our easily interpretable proposal is on par with such classifiers and even outperforms some of them including a neural network, KNN, CART and SVM (lin.). Furthermore, by the simple form of the decision rule and its application to interpretable features, it is not necessary to generate further summary explanations how it decides. It is also interesting to go beyond accuary.  For the adapted-threshold procedure the true positive rate is $ TPR=74.0\% $, whereas the true negative rate is $ TNR=70.4 \% $. The transformer network yields nicely balanced values, $ TPR = 71.05\% $ and $ TNR = 71.99\%$.

\subsection{Simulation}

To check the accuracy of the proposed inferential procedures, a small simulation study was conducted. The design of the distributional model was guided by the above data example. Samples $ (X_1, Z_1), \ldots, (X_n, Z_n) $ with independent coordinates were simulated according to 
\[
X_i \sim \calN( 11.8, 0.6^2 ), \qquad Z_i \sim \calN( 712, 54^2 )
\]
A true threshold function $ c_0(z) = 800 - 25 (z-9) $ was assumed and the threshold indicator set to 
\[ 
Y_i = \eins_{\{ X_i > c_0(Z_i )\}},
\]
for $ 1 \le i \le n $. Notice that in terms of the data example $ Y_i = 0 $ represents a credit default. For sample sizes $ n \in \{ 100, 250, 500, 1000 \} $ the adapted threshold procedure was applied with parameters as used in the data example and a confidence level of $ 1-\alpha = 90\% $.  Based on $1,000 $ Monte Carlo simulation runs, for each case the coverage probability of the confidence band, the accuracy of classification result based on the rule $ \hat Y_i = \eins_{\{ X_i > \hat c_n(Z_i) \}} $, and the true positive rate and true negative rate was estimated. The results are provided in Table~\ref{tab:simulation_results}.  

\begin{table}[ht]
	\centering
	\caption{Simulation results for various performance metrics across different sample sizes ($n$).}
	\label{tab:simulation_results}
	\begin{tabular}{rcccc}
		\textbf{Sample size $n$} & \textbf{Coverage (\%)} & \textbf{Accuracy (\%)} & \textbf{TPR (\%)} & \textbf{TNR (\%)} \\ \hline
		100  & 97.6 & 94.40 & 95.01 & 92.91 \\
		250  & 90.4 & 95.78 & 96.43 & 94.55 \\
		500  & 88.1 & 96.76 & 97.45 & 95.50 \\
		1,000 & 88.4 & 95.93 & 96.95 & 94.18 \\ \hline
	\end{tabular}
\end{table}

One can notice the good coverage properties and the convincing accuracy which exceeds $ 95\% $ for a sample size of ca. $ 500 $ observations.

\section{Proofs}

\subsection{Nonasymptotic Bounds}

We make use of the following auxiliary result proved in an appendix.

\begin{lemma} 
\label{Lemma_Conditional_Measure}
	Let $ X_1, \ldots, X_n $ be i.i.d. random variables (or vectors) and let $ \hat \mu_n(A) = n^{-1} \sum_{i=1}^n \eins_{\{X_i \in A \}} $ for measurable $ A \subset \R $ be the associated empirical measure. 
	Define $ \hat \mu_n(A|E) = \hat \mu_n(A \cap E) / \hat \mu_n(E) $. 
	Denote 	$ N = \sum_{i=1}^n \one( X_i \in E ) $. Then given $ N=k $, $ k  \in \{  0, \ldots,  n \} $, the random variable
	$  k \hat \mu_n(A|E)  $ follows a binomial law with parameters $ k $ and $ P( X  \in  A |  X \in E ) $, i.e.
	\[
\resizebox{\linewidth}{!}{$\displaystyle P( N \hat \mu_n(A|E)  =  l | N=k) = { k \choose l } P( X  \in  A |  X \in E )^l [1-P( X  \in  A |  X \in E )]^{k-l}, \quad l =0, \ldots, k.$}
\]
	In other words, given $ N  $ the estimator  $ \hat \mu_n(A|E) $ is an empirical measure of $ N $ i.i.d. random variables distributed according to the conditional law $ P(X\in A|X \in E) $. 
\end{lemma}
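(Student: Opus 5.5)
The plan is to compute the conditional joint law of the full sample given $N=k$ directly, using only independence and exchangeability. Write $E$ for the conditioning set and $p_E = P(X_1\in E)$. Fix $k\in\{1,\ldots,n\}$; the case $k=0$ is degenerate and will be treated separately with the convention $0\cdot\hat\mu_n(A|E)=0$. The key identity is
\[
N\hat\mu_n(A|E) = n\hat\mu_n(A\cap E) = \sum_{i=1}^n \one(X_i\in A\cap E),
\]
valid whenever $N>0$. So the claim is that, given $N=k$, the number of indices with $X_i\in A\cap E$ is binomial with parameters $k$ and $p:=P(X\in A\cap E)/p_E$.

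First, for a fixed index set $S\subset\{1,\ldots,n\}$ with $|S|=k$, consider the event $\{X_i\in E \ \forall i\in S,\ X_j\notin E \ \forall j\notin S\}$. By independence it has probability $p_E^k(1-p_E)^{n-k}$, and summing over $S$ gives $P(N=k)=\binom nk p_E^k(1-p_E)^{n-k}$. On this event, the variables $X_j$ with $j\notin S$ lie outside $E$ and hence contribute nothing to the count. The count therefore equals $\sum_{i\in S}\one(X_i\in A)$.

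Second, independence shows that, conditionally on this event, the $(X_i)_{i\in S}$ are i.i.d. with the conditional law $P(X\in\cdot\mid X\in E)$. The reason is that the joint probability factorises as
\[
\prod_{i\in S}P(X_i\in B_i\cap E)\prod_{j\notin S}P(X_j\notin E).
\]
Consequently
\[
P\Big(\sum_{i\in S}\one(X_i\in A)=l \,\Big|\, \text{event}_S\Big)=\binom kl p^l(1-p)^{k-l}.
\]
This expression does not depend on $S$. Averaging over $S$ with weights $P(\text{event}_S)/P(N=k)$ yields the stated binomial formula. The same factorisation also gives the final sentence of the lemma: conditionally on $N=k$, $\hat\mu_n(\cdot|E)$ is the empirical measure of $k$ i.i.d. draws from $P(X\in\cdot\mid X\in E)$.

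The proof is elementary, and the only step needing care is the bookkeeping in this mixture over the random index set $S$. One has to check that the $S$-wise conditional law is identical for every $S$, so that the mixture collapses. One also has to handle $k=0$ and the case $p_E=0$, where the conditioning on $N=k$ is only meaningful for $P(N=k)>0$; this requirement is noted as the standing assumption.
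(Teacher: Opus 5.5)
Your proposal is correct and follows essentially the same route as the paper: condition on the random index set $S=\{i: X_i\in E\}$, use independence to show that given $\{S=T\}$ the indicators $\one(X_i\in A)$, $i\in T$, are i.i.d.\ Bernoulli with success probability $P(X\in A\mid X\in E)$, and observe that this law does not depend on $T$, so the mixture over $T$ collapses to the binomial law. Your write-up is slightly more careful than the paper's in two places. Your weights $P(\text{event}_S)/P(N=k)=1/{n\choose k}$ are correct, whereas the paper's normalising constant is printed as $1/{k\choose l}$. You also keep the factors $P(X_j\notin E)$, $j\notin S$, in the factorisation, which the paper drops silently (they cancel anyway).
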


Fix $ z \in \calZ $ and let $ N = n B_n(z) $ be the random number of points located in $ [z-h, z+h] $. Our analysis is conditional on the event $ N>0 $, such that
\[
  P(N=k|N>0) = \frac{P(N=k,N>0)}{1-P(N=0)} = \frac{{n \choose k} p_B^k(z,h)[1-p_B(z,h)]^{n-k} \one( k > 0)}{1-[1-p_B(z,h)]^n},
\]
for $ k \in \{0, 1, \ldots, n \} $.

\begin{lemma}
\label{LemmaE}
	Let $ p \in (0,1) $ and $ q = 1-p $. 
	\begin{itemize}
	\item[(i)] The inverse of the function
	\[
	  E(t,p) = \frac{1}{1-(1-p)^n} \sum_{k=1}^n {n \choose k} p^k (1-p)^{n-k} \exp(-kt^2), \qquad t > 0,
	\]
	is given by
	\[
	  E^{-1}(u,p) = \sqrt{ - \log\left(  \frac{1}{p} \left[  \{ u(1-(1-p)^n) + (1-p)^n \}^{1/n} - (1-p) \right] \right) }
	\]
	for $ u \in (0,1) $. 
	\item[(ii)] If $ t = E^{-1}(u,p) $ for $ 0 < u \to 0 $ with $ u = o\left( \frac{q^{n}}{1-q^n} \right)$, then 
	\[	
	  2 \sqrt{p} t  + t^2 = 2 \sqrt{p} \sqrt{ \log(1/u)  + \log \frac{n p q^{n-1}}{1-q^n}   }  + \log(1/u) +  \log \frac{n p q^{n-1}}{1-q^n}+ O( u C_n/q^{n-1} ).
	 \]	
	 \end{itemize}
\end{lemma}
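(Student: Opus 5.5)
Part (i) rests on evaluating the binomial moment generating function in closed form and then inverting it algebraically. By the binomial theorem,
\[
\sum_{k=0}^n {n \choose k} p^k q^{n-k} e^{-kt^2} = (p e^{-t^2} + q)^n .
\]
Removing the $k=0$ term, which equals $q^n$, gives
\[
E(t,p) = \frac{(pe^{-t^2}+q)^n - q^n}{1-q^n}.
\]
This function is continuous and strictly decreasing in $t>0$. As $t\downarrow 0$ it tends to $1$, and as $t\to\infty$ it tends to $0$, so it is a bijection from $(0,\infty)$ onto $(0,1)$.

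To invert, set $E(t,p)=u$ and solve in the following order:
\begin{align*}
(pe^{-t^2}+q)^n &= u(1-q^n)+q^n,\\
pe^{-t^2} &= \{u(1-q^n)+q^n\}^{1/n} - q,\\
t^2 &= -\log\bigl(p^{-1}[\{u(1-q^n)+q^n\}^{1/n}-q]\bigr).
\end{align*}
For $u\in(0,1)$ the bracket lies strictly between $q$ and $1$, so the logarithm's argument is in $(0,1)$ and the square root is well defined. Taking the square root yields the stated formula for $E^{-1}(u,p)$.

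Part (ii) is an asymptotic expansion as $u\to0$. Write $w = u(1-q^n)/q^n$. The hypothesis $u = o(q^n/(1-q^n))$ is exactly the statement $w\to0$. Then
\[
\{q^n(1+w)\}^{1/n} - q = q\bigl[(1+w)^{1/n}-1\bigr] = q\Bigl[\frac{w}{n} + O(w^2)\Bigr].
\]
Substituting into the formula from (i) gives
\[
t^2 = -\log\Bigl(\frac{q w}{np}(1+O(w))\Bigr) = \log\frac{1}{u} + \log\frac{npq^{n-1}}{1-q^n} + O(w).
\]
Since $w = u(1-q^n)/q^n$, the remainder is of the form $O(uC_n/q^{n-1})$, with the constant $C_n$ absorbing $(1-q^n)/q$.

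Finally, insert this expansion into $2\sqrt p\,t + t^2$. The $t^2$ term is already in the required form. For the $t$ term, use $\sqrt{a+\varepsilon} = \sqrt a + O(\varepsilon/\sqrt a)$, where $a = \log(1/u) + \log\frac{npq^{n-1}}{1-q^n}$ is bounded away from zero for small $u$. The error from this step is of the same order as the $t^2$ remainder, so the two combine into a single remainder term.

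The main obstacle is the bookkeeping of the remainder in (ii). The $O$-constants must be tracked with $n$ fixed and $u\to0$; the expansion of $(1+w)^{1/n}$ requires $w\to0$ rather than merely $u\to 0$, which is why the stated condition on $u$ is needed. I would make $C_n$ explicit, for instance $C_n = (1-q^n)\max(1,1/\sqrt a)$, to be safe.
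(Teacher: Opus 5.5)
Your proposal is correct and follows essentially the same route as the paper. For (i), the binomial theorem gives the closed form $E(t,p)=\{(pe^{-t^2}+q)^n-q^n\}/(1-q^n)$, which is then inverted algebraically. For (ii), you expand $(1+uC_n/q^n)^{1/n}$, then $\log(1+x)$, then the square root, as the paper does; your version additionally checks that $E(\cdot,p)$ is a bijection onto $(0,1)$ and handles the remainders more cleanly.

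One small point. With $n$ fixed, $w\to 0$ and $u\to 0$ are the same condition, so the hypothesis $u=o(q^n/(1-q^n))$ only matters if $n$ varies. In that case, use the sharper remainder $|(1+w)^{1/n}-1-w/n|\le \frac{n-1}{2n^2}w^2$ to get the factor $1+O(w)$ uniformly in $n$. Also note that $a=\log\{np/(qw)\}\to\infty$ regardless of $n$.
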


\color{black}

To prepare the proof of Theorem~\ref{NonAsympPi}, recall the following concentration inequalities: Let $ Y \sim Bin(n,p) $. By Hoeffding's inequality, for any $ t> 0 $ and all $ n \ge 1 $
\[
	P\left( Y/n - p \ge t \right) \le \exp(-2nt^2), \quad  P\left( Y/n - p \le -t \right) \le \exp(-2nt^2),
\]
yielding the two-sided bound $ P( |Y/n - p| \ge t ) \le 2 \exp(-2nt^2 ) $.
Okamoto, see \cite{okamoto1959some}, shows that for any $ t > 0 $ and all $n \ge 1 $
\begin{align*}
	P\left( \sqrt{Y/n} - \sqrt{p} \ge t \right) & < \exp(-2nt^2) < \exp(-nt^2 )), \\
	P\left( \sqrt{Y/n} - \sqrt{p} \le -t \right) & < \exp(-nt^2).
\end{align*} 	
Hence, since $ (\sqrt{p} - t )^2 - p = -2 \sqrt{p}t + t^2 $,
\begin{align*}
	P\left( Y/n - p \ge 2 \sqrt{p} t + t^2 \right) & \le \exp(-nt^2 ), \\
	P\left( Y /n- p \le - 2 \sqrt{p} t + t^2 \right) & \le \exp(-nt^2),
\end{align*} 	
where $ - 2 \sqrt{p} t + t^2 = -( \sqrt{p}t - t^2) < 0 $ iff $ t < \sqrt{p} $. 
Noting that $ -(2 \sqrt{p} t + t^2) \le  - 2 \sqrt{p} t + t^2 $, we obtain the symmetric bound
\[
	\label{MyIneq} 
	P(|Y/n-p| \ge 2 \sqrt{p} t + t^2 ) \le 2 \exp( - n t^2 ).
\]
Inserting $  \sqrt{2} \bar t  $  for $t $ yields a Hoeffing-type bound,
\begin{equation}
	\label{MyIneq} 
	P(|Y/n-p| \ge 2 \sqrt{2 p} t + t^2 ) \le 2 \exp( - 2 n t^2 ).
\end{equation}
Thus, this inequality improves upon Hoeffding's inequality, if  $ 2( \sqrt{p}  t +  t^2 ) < t \Leftrightarrow t < 1/2 - \sqrt{2p} $. Since $ p < 1/8 $ by assumption, (\ref{MyIneq}) is sharper for $ t < 1/4 $.

\begin{proof}[Proof of Theorem~\ref{NonAsympPi}]

	
	\color{black}
	Notice that $ \hat\pi_n(z,h)  = \hat \mu_n( A | E ) $, if $ \hat \mu_n $ is the empirical measure of $ (Y_i, Z_i) $, $ 1 \le i \le n $, with $ A = \{ (y',z') : y' = 1 \} $ and $ E = \{ (y',z') :  |Z'-z| \le h \} $. Thus, 
	by Lemma~\ref{Lemma_Conditional_Measure}, given $N(z) = n B_n(z) $, $ \hat\pi_n(z,h)  $ is an empirical measure of $N(z) $ Bernoulli variables with success probability $ P( Y= 1 \mid |Z-z| \le h ) = \pi(z,h) $. In view of  \eqref{MyIneq} we have
	\begin{align*}
	  & P( | \hat\pi_n(z,h)  -  \pi(z,h)  | > 2 \sqrt{\pi(z,h)} t + t^2 ) \\
	  & \qquad = E \left[  P( | \hat\pi_n(z,h)  -  \pi(z,h)  | > 2 \sqrt{\pi(z,h)} t + t^2  \mid N(z) ) \right] \\
	  & \qquad \le 2 E\left[  \exp( - N t^2 ) \right].
	\end{align*}
	Using the fact that for any binomial r.v. $ B \sim B(n,p)$, $p \in [0,1]$, one has the bound
	\begin{equation}
	\label{BinomBound}
	  E \exp(-\theta B ) = (1-p + pe^{-\theta})^n \le \exp(- n p(1-e^{-\theta})), \qquad \theta > 0,
	\end{equation}
	we obtain 
	\[
		P( | \hat\pi_n(z,h)  -  \pi(z,h)  | > 2 \sqrt{\pi(z,h)} t + t^2 ) \le 2 \exp(- n \pi(z,h) (1-e^{-t^2}))
	\]
	Therefore, if
	\[
	  t = \sqrt{- \log\left( 1 - \frac{\log( 2/\delta) }{n \pi(z,h)}  \right)}
	\]
	then with probability at least $ 1-\delta $,
	\begin{align*}
	  | \hat\pi_n(z,h)  -  \pi(z,h)  | & \le  2 \sqrt{\pi(z,h)} t + t^2 \\
	   & = 2 \sqrt{\pi(z,h)} \sqrt{- \log\left( 1 - \frac{\log( 2/\delta) }{n \pi(z,h)}  \right)} - \log\left( 1 - \frac{\log( 2/\delta) }{n \pi(z,h)}  \right).
	\end{align*}
	In order to obtain a more accurate bound, we condition on $ N > 0 $ and, instead of using nequality \eqref{BinomBound}, we calculate the expectation $ E\left[  \exp( - N t^2 )  | N > 0 \right] $ exactly. This gives
	\begin{align*}
	  & P( | \hat\pi_n(z,h)  -  \pi(z,h)  | > 2 \sqrt{\pi(z,h)} t + t^2 | N > 0 ) \\
	  & \qquad = \sum_{k=1}^n P( | \hat\pi_n(z,h)  -  \pi(z,h)  | > 2 \sqrt{\pi(z,h)} t + t^2 | N =k ) P( N= k | N > 0) \\
	  & \qquad \le  \frac{2}{1-(1-p_B(z,h))^n} \sum_{k=1}^n {n \choose k} p_B^k(z,h) q_N^{n-k}(z,h) \exp(-k t^2) \\
	  & \qquad = 2 E( t, p_B(z,h) ).
	\end{align*}
	Consequently, by Lemma~\ref{LemmaE}, if for $  0 < \delta < 1 $
	\[
\resizebox{\linewidth}{!}{$\displaystyle t = \sqrt{ - \log\left(  \frac{1}{p_B(z,h)} \left[  \left\{ \frac{\delta}{2} (1-(1-p_B(z,h))^n) + (1-p_B(z,h))^n \right\}^{1/n} - (1-p_B(z,h)) \right] \right) }$}
\]
	then
	\[
		P( | \hat\pi_n(z,h)  -  \pi(z,h)  | \le 2 \sqrt{\pi(z,h)} t + t^2 ) \ge 1-\delta.
	\]
	Lastly, conditioning on $N$ and simply using Hoeffding's bound, we have
	\[
	  P( | \hat\pi_n(z,h)  -  \pi(z,h)  | > t )  \le 2 E\left[  \exp( - 2 N t^2 ) \right]  = 2 E( \sqrt{2} t, p_B(z,h) )
	\]
	Thus, equating $ 2  E( \sqrt{2} t, p_B(z,h) ) = \delta $ gives
	\[
\resizebox{\linewidth}{!}{$\displaystyle t = \frac{1}{\sqrt{2}} \sqrt{ - \log\left(  \frac{1}{p_B(z,h)} \left[  \left\{ \frac{\delta}{2} (1-(1-p_B(z,h))^n) + (1-p_B(z,h))^n \right\}^{1/n} - (1-p_B(z,h)) \right] \right) }$}
\]
	\color{black}
\end{proof}

\subsection{(Functional) central limit theorems}

\begin{proof}[Proof~of~Theorem~\ref{CLTpi}]
	Since the statements are for fixed $z $ and $h $, we write $\hat A_n = \hat A_n(z,h) $, $ B_n = B_n(z,h) $, $ p_A = p_A(z,h) $, $ p_B(z,h) $, and indicate the dependence on them only for the quanties arising in the statements. 
	
	(i) Noting that 
	\[	
	\hat\pi_n(z,h)  = f( A_n, B_n ),  \qquad \text{and} \qquad 
	\pi(z,h) = f( p_A, p_B ),
	\]
	with $ f(x,y) = x/y $ for $ (x, y) \in \R \times \R_{\not=0} $, the proof follows by the Cram\'er-Wold technique and the delta method, as our assumption on $z$ ensures that the gradient $\nabla f$ of $f $ does not vanish at $ (p_A,p_B) $. We have for any $ \lambda, \mu \in \R $
	\[
	\lambda \sqrt{n} (A_n(z,h) - p_A) + \mu \sqrt{n} (B_n(z,h) - p_B) = \frac{1}{\sqrt{n}} \sum_{i=1}^n \xi_i(\lambda, \mu)
	\] 
	where 
	\[
	\xi_i(z,h,\lambda,\mu) = \lambda ( \eins( Y_i = 1, |z-Z_i| \le h) - p_A) + \mu ( \eins( |z-Z_i| \le h ) - p_B),
	\] 
	$ 1 \le i \le n $. Using $ E( \eins( Y_i = 1, |z-Z_i| \le h )  \eins( |z-Z_i| \le h )  ) = p_A $, the variance  $ \Var( \xi_1(z,h, \lambda,\mu) ) $ is given by
	\[
	\eta^2(z,h,\lambda, \mu) =  \lambda^2 p_A(1-p_A) + \mu^2 p_B(1-p_B) + 2 \lambda \mu (p_A - p_A p_B).
	\]
	The Cram\'er-Wold technique shows that 
	\[
	\sqrt{n} \left( \begin{array}{c} A_n(z) - p_A \\ B_n(z) - p_B \end{array} \right) 
	\stackrel{d}{\to} N( \vecnull, \matS ),
	\]
	as $ n \to \infty $, where 
	\[ 
	\matS = \left( \begin{array}{cc} p_A(1-p_A) & p_A(1-p_B) \\ p_A(1-p_B) & p_B(1-p_B) \end{array} \right). 
	\]
	Applying the delta method applied to the function $ f $, we obtain 
	\begin{align*}
		\sqrt{n} [\hat\pi_n(z,h) - \pi(z,h) ] 
		&= \nabla f( p_A, p_B ) \sqrt{n} \left( \begin{array}{c} A_n(z) - p_A \\ B_n(z) - p_B \end{array} \right)  + o_P(1) \\
		& = \frac{1}{p_B} \sqrt{n}( A_n(z) - p_A ) - \frac{\pi(z,h)}{p_B} \sqrt{n} (  B_n(z) - p_B ) + o_P(1).
	\end{align*} 
	Hence, we obtain the asymptotic linearization
	\begin{equation}
		\label{AsympLin}
		\sqrt{n} [\hat\pi_n(z,h) - \pi(z,h) ] = \frac{1}{\sqrt{n}} \sum_{i=1}^n \xi_i( p_B ^{-1}, - \pi(z,h)/p_B ) + o_P(1).
	\end{equation} 
	Now assertion (i) follows from the CLT and Slutzky's lemma noting that
	\begin{align*}
		\eta^2(z,h) &= \eta^2( p_B ^{-1}, - \pi(z,h)/p_B ) \\
		& =\frac{p_A(z,h)(1-p_A(z,h))}{p_B^2(z,h)} + \frac{p_A^2(z,h)(1-p_B(z,h))}{p_B^3(z,h)} - 2 \frac{p_A^2(z,h)(1-p_B(z,h))}{p_B^3(z,h)} \\
		& = \frac{p_A(z,h)(1-p_A(z,h))}{p_B^2(z,h)} - \frac{p_A^2(z,h)(p_B(z,h)-1)}{p_B^3} \\
		& = \frac{\pi(z,h) - p_A(z,h) \pi(z,h) - \pi^2(z,h) + p_A(z,h) \pi(z,h)}{p_B(z,h)} \\
		& = \frac{\pi(z,h)(1-\pi(z,h))}{p_B(z,h)}
	\end{align*}
	Further, the consistency of $ \hat\eta_n^2 $ for $ \eta^2 $ follows from the continuous mapping theorem. Lastly, (i) implies (ii). 
\end{proof}

\begin{lemma}[Uniform Asymptotic Linearity]
	\label{LemmaApproxUniform}
\begin{itemize}
	\item[(i)]
	Let \(K\subset\mathbb R\) be compact, \(t_0\in(0,1)\), and assume
	\[
	\inf_{z\in K}p_{B,h}(z)>0.
	\]
 	Then
 	\[
 	\sqrt n  \{\widehat\pi_{\lfloor nt\rfloor}(z,h)-\pi(z,h)\} = 
 	\left(1 + o_P(1) \right)  \frac{1}{\sqrt{n}} 
 	\frac{n}{\lfloor nt \rfloor } \sum_{i=1}^{\lfloor nt \rfloor } \xi_i(z,h)
 	\]
 	where the $ o_P(1) $ is uniform in $ t \in [t_0,1] $ and $ z \in  K$,
	where
	\[
	\xi_i(z,h)
	=\frac{\mathbf1_{\{|Z_i-z|\le h\}}\{Y_i-\pi(z,h)\}}
	{p_{B,h}(z)}.
	\]
	\item[(ii)] Let $ K = [a,b]$,  \(0<a<b<1\), and assume $ 	\inf_{z\in K}p_{B,h}(z)>0 $ and 
	\[
	0 < a\le\inf_{z\in K}\pi(z,h)
	\le\sup_{z\in K}\pi(z,h)\le b < 1
	\]
	for constants $ a, b $. 	Let \(q(r)=\Psi^{-1}(1-r) \) be twice continuously differentiable on
	\([a,b]\). Then
	\[
		\sqrt n\{\widehat c_{\lfloor nt \rfloor}(z,h)-c_h(z)\} = (1+o_P(1)) \frac1{\sqrt n} \frac{n }{\lfloor nt \rfloor} \sum_{i=1}^{\lfloor nt \rfloor}
		q'(\pi(z,h))\xi_i(z,h),
	\]
	where the $ o_P(1) $ is uniform in $ t \in [t_0,1] $ and $ z \in K $.
\end{itemize}
\end{lemma}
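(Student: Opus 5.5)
We propose to prove part (i) by an exact algebraic decomposition of the ratio estimator and then to control the remainder uniformly, using that the relevant indicator classes are Glivenko--Cantelli and Donsker. Write $m=\lfloor nt\rfloor$ and note that $m\ge \lfloor nt_0\rfloor\to\infty$ uniformly in $t\in[t_0,1]$. Whenever $B_m(z,h)>0$,
\[
\widehat\pi_m(z,h)-\pi(z,h)=\frac{A_m(z,h)-\pi(z,h)B_m(z,h)}{B_m(z,h)}
=\frac{p_{B,h}(z)}{B_m(z,h)}\cdot\frac1m\sum_{i=1}^m\xi_i(z,h),
\]
because $A_m-\pi B_m=m^{-1}\sum_{i\le m}\mathbf 1_{\{|Z_i-z|\le h\}}(Y_i-\pi(z,h))$. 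Multiplying by $\sqrt n$ gives exactly the claimed form, with the factor $1+o_P(1)$ equal to $p_{B,h}(z)/B_m(z,h)$. Part (i) therefore reduces to showing
\[
\sup_{t\in[t_0,1],\,z\in K}\bigl|B_{\lfloor nt\rfloor}(z,h)-p_{B,h}(z)\bigr|\to0
\]
in probability, together with $\inf_K p_{B,h}>0$, which is assumed. This also shows that $P(\min_{t,z}B_m>0)\to1$, so the event where $\widehat\pi$ is defined as $0$ is asymptotically negligible.

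For this uniform step, the class $\{(y,u)\mapsto \mathbf 1_{\{|u-z|\le h\}}: z\in\R\}$ consists of indicators of intervals. It is VC, hence Glivenko--Cantelli, so $\sup_z|B_m(z,h)-p_{B,h}(z)|\to0$ almost surely as $m\to\infty$. Since $m\ge\lfloor nt_0\rfloor$, the supremum over $t\in[t_0,1]$ is bounded by $\sup_{m\ge nt_0}\sup_z|B_m-p_B|$, which tends to $0$ almost surely. Then $\sup|p_B/B_m-1|\le \sup|B_m-p_B|/(\inf p_B-\sup|B_m-p_B|)=o_P(1)$.

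We also want the leading term to be $O_P(1)$ uniformly, so that "$(1+o_P(1))$" can equivalently be read as an additive $o_P(1)$. For this we use that the class $\{\mathbf 1_{\{|u-z|\le h\}}(y-\pi(z,h))/p_{B,h}(z): z\in K\}$ is Donsker. It is the product of a VC indicator class with the uniformly bounded functions $y-\pi$ and $1/p_B$, which are bounded because $\inf_K p_B>0$. The sequential empirical process indexed by a Donsker class is tight (the Kiefer--Müller process), so $\sup_{t,z}|n^{-1/2}\sum_{i\le nt}\xi_i|=O_P(1)$, and $n/m\le 1/t_0+o(1)$.

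For part (ii), we would write $\widehat c_m-c_h=q(\widehat\pi_m)-q(\pi)$ and Taylor expand:
\[
q(\widehat\pi_m)-q(\pi)=q'(\pi)(\widehat\pi_m-\pi)+\tfrac12 q''(\tilde\pi)(\widehat\pi_m-\pi)^2 .
\]
By part (i) and the tightness just noted, $\sup_{t,z}\sqrt n|\widehat\pi_m-\pi|=O_P(1)$, so $\sup|\widehat\pi_m-\pi|=O_P(n^{-1/2})$. With probability tending to one, $\widehat\pi_m$ therefore lies in a slightly enlarged compact interval inside $(0,1)$ on which $q''$ is bounded. Here we need $q$ to be $C^2$ on a neighbourhood of $[a,b]$, or else we shrink the margins by continuity. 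The quadratic term is then $O_P(n^{-1})$, which is $o_P(n^{-1/2})$ uniformly. The linear term equals $(1+o_P(1))\,n^{-1/2}(n/m)\sum q'(\pi)\xi_i$ by (i), with $q'(\pi(z,h))$ bounded on $K$.

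The remainder, of order $O_P(n^{-1/2})$, has to be absorbed into the multiplicative $(1+o_P(1))$ form. Where the leading sum may vanish, we would instead state it as an additive uniform $o_P(1)$, which is the form actually needed for weak convergence via Slutsky.

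The main obstacle is the uniformity in $(t,z)$. Pointwise this is just the delta method of Theorem~\ref{CLTpi}. The work lies in the Glivenko--Cantelli and Donsker arguments for the indicator classes and in the sequential (partial-sum) tightness.
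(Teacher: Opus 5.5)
Your proposal is correct and follows the paper's proof essentially step for step: the exact identity $\widehat\pi_m-\pi=(p_{B,h}/B_m)\,m^{-1}\sum_{i\le m}\xi_i$, Glivenko--Cantelli for $B_{\lfloor nt\rfloor}$ uniformly over $m\ge\lfloor nt_0\rfloor$, sequential Donsker tightness, and a second-order Taylor expansion of $q$; your remarks that $q\in C^2$ is needed on a neighbourhood of $[a,b]$ and that the Taylor remainder is only additively $o_P(1)$ point to issues the paper itself passes over silently. The multiplicative form in (ii) can in fact be kept: on an event of probability tending to one, the mean value theorem gives exactly $\sqrt n\{\widehat c_m-c_h\}=\frac{q'(\tilde\pi)}{q'(\pi)}\cdot\frac{p_{B,h}}{B_m}\cdot\frac{1}{\sqrt n}\frac{n}{m}\sum_{i\le m}q'(\pi)\xi_i$, where $q'=-1/(\psi\circ q)$ is continuous and nonvanishing on a compact neighbourhood of $[a,b]$ and $\sup_{t,z}|\widehat\pi_{\lfloor nt\rfloor}-\pi|\to0$ almost surely by the Glivenko--Cantelli property of the classes defining $A_m$ and $B_m$, so the prefactor is $1+o_P(1)$ uniformly in $(t,z)$, and for the lemma itself $q\in C^1$ near $[a,b]$ plus the Glivenko--Cantelli step already suffice.
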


\begin{proof}
	Clearly, by Example 2.5.4 and Example 2.10.10 of \cite{VaartWellner2023}, the class of functions
$ \calG = \{  g_z : z  \in K \} $  inducing the processes $ B_{\lfloor mt\rfloor}(z,h) $, given by the functions
\[  
g_z(z',y') = \eins_{\{|z'-z| \le h\}}, \qquad (y',z') \in \{0,1\} \times \R, z \in K,
\]
is a Glivenko-Cantelli class. Therefore, by the sequential uniform central limit theorem, 
\[
\sup_{t,z}|B_{\lfloor nt\rfloor}(z,h)-p_B|= o_1(1), \qquad P-a.s.,
\]
such that 
\[
\sup_{t,z}| 1/B_{\lfloor nt\rfloor}(z,h)-1/p_B|= o_1(1), \qquad P-a.s.,
\]
since $ \inf_{t,h} p_B(z,h) > 0 $ by assumption. Now, for any $ k \in \N $ such that $ B_k(z,h) > 0 $ we have the representation
	\begin{align*}
	\widehat\pi_k(z,h)-\pi(z,h)
	&=\frac{(A_k(z,h)-p_A(z,h))-\pi(z,h)(B_k(z,h)-p_B(z,h))}{B_k(z,h)} \\
	& = \frac{1}{B_k(z,h)} \frac{1}{k} \sum_{i=1}^k \eins( |Z_i-z| \le h ) \{ Y_i - \pi(z,h) \},
	\end{align*}
	yielding
	\begin{align*}
		\sqrt n(\widehat\pi_{\lfloor nt \rfloor}(z,h)-\pi(z,h))
		&=  \frac{p_B(z,h)}{B_{\lfloor nt \rfloor}(z,h)} \frac{1}{\sqrt{n}} 
		\frac{n}{\lfloor nt \rfloor } \sum_{i=1}^{\lfloor nt \rfloor } \xi_i(z,h) \\
		& = \left(1 + o_P(1) \right)  \frac{1}{\sqrt{n}} 
		\frac{n}{\lfloor nt \rfloor } \sum_{i=1}^{\lfloor nt \rfloor } \xi_i(z,h),
	\end{align*}
	where the $ o_P(1) $ is uniform in $ t,z $, since 
	\[
	\sup_{t,z} \left| \frac{p_B(z,h)}{B_{\lfloor nt \rfloor}(z,h)}-1 \right|
		  =o_P(1), \qquad n \to \infty.
	\]
 This verifies (i). 	
To see the second assertion, notice that the uniform consistency of \(\widehat\pi_{\lfloor nt \rfloor }\), $t \in [t_0,1] $, and boundedness of \(q''(u) = -\psi'( \Psi^{-1}(1-u))/\psi( \Psi^{-1}(1-u))\) on \([a,b]\)  imply that in the second-order Taylor expansion
	\[
\widehat c_{\lfloor nt \rfloor }(z,h)-c_h(z)
=
q'(\pi(z,h))
\{\widehat\pi_{\lfloor nt \rfloor}(z,h)-\pi(z,h)\}
+R_n(t,z),
\]
the remainder satisfies
\[
\sup_{t,z}|R_n(t,z)|
\le
\frac12\|q''\|_{\infty,[a,b]}
\sup_{t,z}
|\widehat\pi_{\lfloor nt \rfloor}(z,h)-\pi(z,h)|^2
=o_P(n^{-1/2}),
\]
since, as shown below at the beginning of the proof of Theorem~\ref{DonskerCLT}, $  \frac{1}{\sqrt{n}} 
\sum_{i=1}^{\lfloor nt \rfloor } \xi_i(z,h)$ converges for $ t = 1 $ weakly to a tight Gaussian process indexed by $ f \in \mathcal{F} $, and by independence of the summands this implies the weak convergence of the sequential process in $t \in [t_0,1] $ and $ f \in \mathcal{F} $, see the detailed discussion below, such that 
\[
\sup_{t,z} \sqrt{n} |\widehat\pi_{\lfloor nt \rfloor}(z,h)-\pi(z,h)| = O_P(1).
\]
\end{proof}

\begin{proof}[Proof of Theorem~\ref{DonskerCLT}]
	Consider the class $ \calF = \{ f_z : z \in K \} $ of functions $ \R \times \{0,1\} \to \R $, where
	\[
	f_z( x, y ) = \frac{1}{p_B(z,h)} \eins( |x-z| \le h, y = 1) - \frac{\pi(z,h)}{p_B(z,h)} \eins( |x-z| \le h ) ), \qquad x \in \R, y \in \{ 0,1 \}, 
	\]
	for $ z \in \R$, and notice that $ f_z(Z_i,Y_i) = \xi_i(z,h) $.	
	$ \calF  $ is a $ P $-Donsker class, see Example 2.5.4 and Example 2.10.9 of \cite{VaartWellner2023}. Therefore, the empirical process $ \mathscr{P}_n(f) = \sqrt{n}(P_n-P)(f) $, $ f \in \calF $, given for $ f = f_z $, $ z \in K $, by
	\[
	\mathscr{P}_n(f_z) = \sqrt{n}(P_n-P)(f_z) = \frac{1}{\sqrt{n}} \sum_{i=1}^n \xi_i( z, h ),
	\]
	converges weakly in  $ l^\infty( \calF) $ to a mean zero tight Borel measurable Brownian bridge process $ \mathscr{P} \in l^\infty(\calG) $ with covariance function
	\[
	\Cov( \mathscr{P}(f_u), \mathscr{P}(f_{v}) ) = E(f_u f_v) - E(f_u) E(f_v), \qquad u, v \in K,
	\]
	where $ E(f_u) = 0 $ and
	\begin{align*}
		E(f_u f_v) & = \frac{1}{p_B(u,h)p_B(v,h)} P(|Z_1-u| \le h, |Z_1-v| \le h, Y_1=1) \\
		& \quad + \frac{\pi(u,h)\pi(v,h)}{p_B(u,h)p_B(v,h)} P(|Z_1-u|\le h, |Z_1-v|\le h) \\
		& \quad - \frac{\pi(v,h)}{p_B(u,h)p_B(v,h)} P( |Z_1-u|\le h, |Z_1-v|\le h, Y_1=1) \\
		& \quad - \frac{\pi(u,h)}{p_B(u,h)p_B(v,h)} P( |Z_1-u|\le h, |Z_1-v|\le h, Y_1=1) \\    
		& = \frac{1}{p_B(u,h)p_B(v,h)} \biggl( (1-\pi(u,h) - \pi(v,h)) P( |Z_1-u|\le h, |Z_1-v|\le h, Z_1=1)  \\
		& \qquad + \pi(u,h) \pi(v,h) P(|Z_1-u|\le h, |Z_1-v|\le h)  \biggr) \\
		& = \frac{1}{p_B(u,h)p_B(v,h)} \left( (1-\pi(u,h) - \pi(v,h)) \gamma_2(u,v,h)  + \pi(u,h) \pi(v,h) \gamma_1(u,v,h) \right), 
	\end{align*}
	with 
	\begin{align*} \gamma_1(u,v,h) &= P(|Z_1-u|\le h, |Z_1-v|\le h), \\ 
		\gamma_2(u,v,h) &= P( |Z_1-u|\le h, |Z_1-v|\le h, Y_1=1),
	\end{align*}  
	for $ u, v \in K $. Combining $ \calE_n \Rightarrow \calE $, $ n \to \infty $, and Lemma~\ref{LemmaApproxUniform}, which implies
	\[
	\sup_{z \in K} | \sqrt{n} ( \hat \pi_n(z,h) - \pi(z,h) ) - \mathscr{P}_n(f_z) | = o_P(1),
	\]
	an application of Slutzky's lemma yields the weak convergence 
	\[ 
	\{ \sqrt{n} ( \hat \pi_n(z,h) - \pi(z,h) ) : z \in K \} \Rightarrow \{ \mathscr{P}(z,h) : z \in K \},
	\]
	as $ n \to \infty $, in $ l^\infty(\R) $. Now the continuous mapping theorem entails
	\[
	\sup_{z \in K} \sqrt{n} | \hat \pi_n(z,h) - \pi(z,h)  | \stackrel{d}{\to} \sup_{z \in K} | \mathscr{P}(z,h) |,
	\]
	as $ n \to \infty $. The proof for the process $ \mathscr{C}_n(z,h) $ follows the same arguments using the asymptotic linearity again. Taking account of the asymptotic scaling with the factor $ g(z,h) := -1/\psi( \Psi^{-1}(1-\pi(z,h))) $ which is positive for all $ z \in K $ by assumption, let us consider the class $ \tilde \calF = \{ \tilde f_z : z \in K \} $ of functions $ \R \times \{0,1\} \to \R $, where
	\[
	\tilde f_z( x, y ) = \frac{g(z,h)}{p_B(z,h)} \eins( |x-z| \le h, y = 1) - \frac{\pi(z,h)g(z,h)}{p_B(z,h)} \eins( |x-z| \le h ) ), \qquad x \in \R, y \in \{ 0,1 \}, 
	\]
	for $ z \in \R$. Again, this is a $ P $-Donsker class by virtue of Example 2.5.4 and Example 2.10.9 of \cite{VaartWellner2023}, such that $\mathscr{C}_n(z,h) = (P_n-P)(\tilde f_z ) $, $ z \in K $,  converges weakly to a $P$-Brownian bridge $ \mathscr{C}(\tilde f_z ) $, $ z \in K $, as $ n \to \infty $. Combining this with Lemma~\ref{LemmaApproxUniform}, which yields
	\[
	\sup_{z \in K} | \sqrt{n} ( \hat c_n(z,h) - c(z,h) ) - \mathscr{C}_n(f_z) | = o_P(1),
	\]
	as $ n \to \infty $, Slutzky's lemma entails the claimed weak convergence
	\[
	\mathscr{C}_n \Rightarrow \mathscr{C}, \qquad n \to \infty.
	\]
	Lastly, the covariance function $ \zeta(u,v,h) $ follows by a simple calculation.  This completes the proof for the empirical processes $ \mathscr{P}_n $ and $ \mathscr{C}_n $ indexed by $ K $. 
	
	Since the approximating linear processes take the form of empirical processes  indexed by $P$-Donsker function classes $ \calF $ and $ \tilde \calF $, respectively, and are calculated from a sequence of i.i.d. random variables, the classes $ \calF $ and $ \tilde \calF $ are functionally Donsker as well, \cite[Th.~2.12.1]{VaartWellner2023}. By definition, this means that the sequential processes 
	\[
	  \mathscr{Q}_n^{lin}(t,z,h) = \frac{1}{\sqrt{n}} \sum_{i=1}^{\lfloor nt \rfloor} \xi_i(z,h) \qquad \text{and} \qquad \mathscr{D}_n^{lin}(t,z,h) = \frac{1}{\sqrt{n}} \sum_{i=1}^{\lfloor nt \rfloor} \zeta_i(z,h)
	\]
	converge in distribution in $ l^\infty( [0,1] \times K; \R ) $ to  tight mean zero Kiefer-M\"uller processes $ \mathscr{Q}^{lin}(t,z,h) $ and $ \mathscr{D}^{lin}(t,z,h)$ , $ (t,z) \in [0,1] \times K $, with covariance functions
	\[
	  \Cov( \mathscr{Q}^{lin}(s,u,h), \mathscr{Q}^{lin}(t,v,h)) = (s \wedge t) \eta(u,v,h)
	\]
	and
	\[
		\Cov( \mathscr{D}^{lin}(s,u,h), \mathscr{D}^{lin}(t,v,h)) = (s \wedge t) \zeta(u,v,h),
	\]
	respectively. By Lemma~\ref{LemmaApproxUniform}, applied to the processes scaled by the factor $  \frac{ n }{ \lfloor nt \rfloor }$,  we may apply Slutzky's lemma to conclude that, for $ t\in [t_0,1] $ and $ z \in K $,
	$ \mathscr{P}_n(t,z,h) $ converges weakly to the mean zero Gaussian processes $ \mathscr{P}(t,z,h ) = t^{-1} \mathscr{Q}^{lin}(t,z,h) $ and $ \mathscr{C}_n(t,z,h)  $ to $ \mathscr{C}(t,z,h) =  t^{-1} \mathscr{D}^{lin}(t,z,h) $, as $ n \to \infty $,
	in $ l^\infty( [t_0,1] \times K; \R ) $. Clearly, the covariance functions are given by
	\[
	  \Cov( \mathscr{P}(s,u,h), \mathscr{P}(t,v,h) ) =  \frac{(s \wedge t)}{st} \eta(u,v,h) 
	\]
	and
	\[
	\Cov( \mathscr{C}(s,u,h), \mathscr{C}(t,v,h) ) =   \frac{(s \wedge t)}{st} \zeta(u,v,h), 
	\]
	for $ s, t \in [t_0,1] $ and $ u,v \in K $.	
\end{proof}

\begin{proof}[Proof of Theorem~\ref{Th-Lepski}]  
	Write $\overline\pi_h=m_h^{-1}\sum_{i\in N_h}\pi(Z_i)$.
	Using Lemma~\ref{Lemma_Conditional_Measure},  
	Hoeffding's inequality and a union bound over the $J$ bandwidths yields
	$|\widehat\pi_h-\overline\pi_h|\le s_h$ simultaneously with probability
	at least $1-\alpha$. When using Lemma~\ref{Lemma_Conditional_Measure} and Okamoto's inequality,
	$\overline\pi_h-\widehat\pi_h
	\le2\sqrt{\overline\pi_h}t_h+t_h^2$ implies
	$\sqrt{\overline\pi_h}\le t_h+\sqrt{\widehat\pi_h+2t_h^2}$.
	Hence $\overline\pi_h\le U_h$, giving the same simultaneous bound
	with \eqref{eq:compact-okamoto}.
	The Lipschitz assumption gives
	$|\overline\pi_h-\pi(z)|\le B_h$, so
	$|\widehat\pi_h-\pi(z)|\le W_h$ for all $ h \in \mathcal{H} $.
	The triangle inequality then gives
	$|\widehat\pi_h-\widehat\pi_g|\le W_h+W_g$ for all $ g, h \in \mathcal{H} $ with $ g < h $. This proves \eqref{eq:compact-guarantee}.
\end{proof}

\bibliography{lit}

\appendix

\section{Degeneracy of the weak limit}

The degeneracy of the weak limit of the Parzen-Rosenblatt density process under the asymptotic regime $ nh \to \infty $ and $h \to 0 $, although the finite-dimensional distributions converge to non-degenerate Gaussian laws, carries over to Nadaraya-Watson type estimators such as $ \hat{m}_n(x,h) $ as defined in Theorem~\ref{IP-impossible}. This holds true, although certain functionals such as integral means of such processes may converge to non-degenerate limits. The basic reason is that integrals, generalizing the linear combinations in finite dimensional spaces to infinite-dimensional ones, have a different convergence rate. 

\begin{proof}[Proof of Theorem~\ref{IP-impossible}] The proof follows as in \cite{Nishiyama01032011}. For completeness, we provide details. By \cite{VaartWellner2023} it suffices to show that $ < G, g >  = \int G(x) g(x) \, dx \stackrel{d}{=} 0 $ for all $ g \in L_2(\R)$. By continuity of the inner product, this follows from 	
	 $ \int M_n(x,h) g(x) \, d x \stackrel{P}{\to} 0 $ (equivalently $ \stackrel{d}{\to} 0 $), as  $ h \to 0 $ and $ nh \to \infty $, for any $ g \in L_2(\R )$. Let $ H = \{ e_j : j \ge 1 \} $ be the weighted Hermite polynomials which span $ L_2(\R) $ with respect to the inner product $ < \cdot, \cdot > $ and are bounded. Denote by $C_H $ a norm constant of $H$.  We may write $ h = \sum_{i=1}^\infty e_i < h, e_i> $. Since 
	 \[ 
	 \| \int G(x) \sum_{i > k} e_i(x)  < h, e_i > \, dx \|_{L_1} \le \| G \|_{L_2} \sum_{i>k, j>k} <e_i,e_j>  <h,e_i><h,e_j> \to 0,
	 \] 
	 for $k \to \infty$, so that $ \int G(x) \sum_{i > k} e_i(x)  < h, e_i> \, dx  $ converges to $0$ in probability, as $ k \to \infty $, it suffices to show that  $  \| < M_n(x,h),  e_j >  \|_{L_2} \to 0  $, as $ nh \to \infty $ and $h \to 0 $, since this implies
	\[
	\sum_{j=1}^k < M_n(x,h),  e_j >  \stackrel{L_2}{\to} 0,
	\]
	as $ nh \to \infty $ and $h \to 0 $,  Let $ r_n = \sqrt{nh} $. We have
	\[
	< M_n(x,h), e_j >  = \sum_{i=1}^n [V_{ni} - E(V_{ni}) ]
	\]
	where $ V_{ni} = \frac{\sqrt{nh}}{n} \int \frac{1}{h} \eins_{\{ |X_i-x|\le h \}} Y_i e_j(x) \, dx $, $ 1 \le i \le n$. Noting that
	\[
	\left[ \int \frac{1}{h} \eins_{\{ {u-x|\le h}\}} y e_j(x) \, dx \right]^2 \le 2 C_H y^2
	\]
	the variances of the $ V_{ni} $ can be bounded by 
	\begin{align*}
		E(V_{ni}^2) + (E V_{ni})^2  
		& = \frac{nh}{n^2} \int \left[ \int \frac{1}{h} \eins_{\{ {u-x|\le h}\}} y e_j(x) \, dx  \right]^2 f(u,y) d(u,y)  \\
		& \qquad + \frac{nh}{n^2} 
		\left\{ \int \left[  \int \frac{1}{h} \eins_{\{ |u-x| \le h\}} y e_j(x) \, dx  \right] f(u,y)  \right\}^2 \\
		& \le 4 C_H^2 E(Y_1^2) \frac{h}{n}.
	\end{align*}
	Hence, 
	\[
	\Var(  <M_n(x,h), e_j >  ) = \sum_{i=1}^n \Var(V_{ni}) = O(h),
	\]
	which completes the proof. 
\end{proof}

\section{Additional proofs}

\begin{proof}[Proof of Lemma~\ref{Lemma_Conditional_Measure}]
	Denote $ P_k( \cdot ) = P( \cdot | N=k )$. Let $ S = \{ i \in \{ 1, \ldots, n \} : X_i \in E \} $.
	Notice that $ \{ N= k \} = \{ |S| = k \} $. Clearly, given $N = k $ the random set $S$ is uniformly distributed on the set $ \calS_k $ of subsets of $ \{ 1, \ldots, n \} $ with $k$ elements. 
	Further, for any $ T \in \calS_k $ it holds $  \{ S=T \} = \{ S = T, N=k \} $, which implies
	$ P_k( \cdot | S=T ) = P( \cdot | S=T) $. By conditioning on the possible values of $ S $ we obtain
	\begin{align*}
		P_k( N \hat \mu_n(A|E)  = l ) 
		&= \frac{1}{{k \choose l}} \sum_{T \in \calS_k}
		P( k \hat \mu_n(A|E) = l | S = T ) \\
		& = \frac{1}{{k \choose l}} \sum_{T \in \calS_k}
		P\left(  \sum_{i=1}^n \one( X_i \in A, X_i \in E ) = l \mid S = T  \right)  \\
		& = \frac{1}{{k \choose l}} \sum_{T \in \calS_k}
		P\left(  \sum_{i \in T} \one( X_i \in A ) = l \mid S = T  \right) 
	\end{align*}
	Since $ \{ S = T \} = \{ X_i \in E, i \in T, X_i \not\in E, i \not\in T \} $, for $ \vecx = (x_i)_{i\in T} \in \R^k $
	\begin{align*}
		P( (\eins(X_i\in A))_{i \in T} = \vecx | S= T) &= \frac{P(\eins(X_i\in A))_{i \in T} = \vecx, X_i \in E, i \in T) }{P( X_i \in E, i \in T)} \\
		& = \frac{\prod_{i \in T} P( \eins(X_i \in A) = x_i, X_i \in E)} { \prod_{i \in T} P( X_i \in E )} \\
		& = \prod_{i \in T} P( \one(X_i \in A) = x_i | X_i \in E ).
	\end{align*}
	Thus, for fixed $ T \in \calS_k $, $ \eins(X_i \in A) $, $i \in T $, are $|T|=k$ random Bernoulli variables being i.i.d. under $P(\cdot|S=T)$ with success probability $P( X_i \in A | X_i \in E )$. Plugging in the corresponding conditional binomial probabilities under $P(  \cdot | S=T) $ for $ \{ \sum_{i \in T} \one( X_i \in A ) = l  \} $, $ l \in \{ 0, \ldots, k \} $, completes the proof. 
\end{proof}

\begin{proof}[Proof of Lemma~\ref{LemmaE}]
	Using the fact that $ \sum_{k=0}^n {n \choose k} (p e^{-t^2})^k (1-p)^{n-k} = ( p e^{-t^2} + (1-p) )^n $ the first result follows by a straightforward calculation. 
	
	To show the second assertion, let $ C_n = 1-q^n $. Write
	\[
	u = f(t) =  \frac{1}{C_n} \left[ ( p e^{-t^2} + q)^n  - q^n \right]
	\]
	Note that $ u = f(t) $ iff. $ C_n u + q^n = (p e^{-t^2} + q )^n $ iff. $ p e^{-t^2} = (q^n + C_n u)^{1/n} -q $. 
	The Taylor expansion of $ g(x) = (1+x)^a $, $0 < a <  1 $, yields $ (1+x)^a = 1 + a x + a(a-1) x^2/4 + O(x^3)$, with $ |a(a-1)| = (n-1)/n^2 \le 1 $ if $ a = 1/n$, for $ 0 < x < 1 $, since $ |g''(x)| = |a(a-1)(1+x)^{a-2}| \le 1$ for all $|x|<1 $, if $ a=1/n$. Hence, if $ u (1-q^n) / q^n < 1 $, then for some $ 0 < \xi < u $
	\begin{align*}
		p e^{-t^2} &= q (1+ u C_n / q^n )^{1/n}  - q \\
		& = q( 1 + u  C_n / n q^n + (1/4) \xi^2 C_n^2 / n q^{2n}  ) - q \\
		& = u C_n /n q^{n-1} + (1/4) \xi^2 C_n^2 / n q^{2n-1}.
	\end{align*}
	Next recall $ \log(1+x) = x + O(x^2) $, $ |x| < 1 $. Provided $ (1/4) u C_n/q^{n-1} = (1/4) u(1-q^n)/q^n < 1 $ we obtain
	\[
	t^2 = - \log\left( u  \frac{C_n}{n p q^{n-1}}( 1  + (1/4)(\xi^2/u) C_n/q^n )  \right) =  -\log(u)  - \log( C_n/n p q^{n-1}) + O( u C_n/q^n ).
	\]
	Therefore, if $ u (1-q^n) / q^n < 1 $, we can conclude 
	\[
	t = \sqrt{ -\log(u)  - \log( (1-q^n)/n p q^{n-1})  }  + O( u (1-q^n)/q^n )
	\]
	yielding
	\begin{align*}
		2 \sqrt{p} t  + t^2 & = 2 \sqrt{p} \sqrt{ \log(1/u)  + \log( n p q^{n-1}/(1-q^n))  }  + \log(1/u) + \log( n p q^{n-1}/(1-q^n)) \\
		& \qquad  + O( u C_n/q^{n-1} ),
	\end{align*}
	where the $ O $ term is $ o(1) $ if $ u = o( q^{n-1}/(1-q^n) ) $. 
\end{proof}

\section{Transformer network}
\label{AppNet}

\usetikzlibrary{arrows.meta,positioning,calc}
\begin{tikzpicture}[
	font=\small,
	>=Latex,
	flow/.style={->,thick,draw=black!75},
	box/.style={draw=black!65,rounded corners=2pt,align=center,
		minimum height=7mm,text width=5.5cm,inner sep=4pt,fill=black!3},
	block/.style={box,fill=blue!7,draw=blue!55!black},
	operation/.style={box,text width=4.5cm},
	sum/.style={draw=black!70,circle,minimum size=6mm,inner sep=0pt,fill=white},
	note/.style={font=\footnotesize,align=center,text width=5.7cm}
	]
	\node[font=\bfseries] at (3,0) {(a) Network Ensemble $ k =1, 2, 3 $};
	\node[font=\bfseries] at (10.7,0) {(b) Inside each Transformer block};
	
	\node[box] (input) at (3,-0.9)
	{Two numerical inputs\\$x_1=\mathrm{FICO},\quad x_2=\mathrm{logamount}$};
	\node[box] (scale) at (3,-2.0)
	{Training-set standardization\\$\widetilde x_j=(x_j-\mu_j)/s_j$, $j=1,2$};
	\node[block] (token) at (3,-3.2)
	{feature embeddings\\
		$\boldsymbol t_j=\widetilde x_j\boldsymbol w_j+\boldsymbol b_j\in\mathbb R^{64}$};
	\node[block] (sequence) at (3,-4.4)
	{Prepend learned classification token\\
		$H^{(0)}=[\boldsymbol t_{\mathrm{CLS}};\boldsymbol t_1;\boldsymbol t_2]
		\in\mathbb R^{3\times64}$};
	\node[block] (block1) at (3,-5.5)
	{Transformer block 1\quad $(3\times64)$};
	\node[block] (block2) at (3,-6.5)
	{Transformer block 2\quad $(3\times64)$};
	\node[box] (cls) at (3,-7.5)
	{Extract final CLS representation\\
		$\boldsymbol h_{\mathrm{CLS}}^{(2)}\in\mathbb R^{64}$};
	\node[box] (head) at (3,-8.8)
	{LayerNorm $\longrightarrow$ Linear $(64\to1)$\\
		$\ell_k=\boldsymbol a_k^\top
		\operatorname{LN}(\boldsymbol h_{\mathrm{CLS}}^{(2)})+b_k$};
	\node[box] (prob) at (3,-10.0)
	{Sigmoid output\\$p_k(x)=\{1+\exp(-\ell_k)\}^{-1}$};
	\node[block] (average) at (3,-11.5)
	{Three-member probability ensemble\\[2pt]
		$\widehat p(x)=\{p_1(x)+p_2(x)+p_3(x)\}/3$};
	\foreach \a/\b in {input/scale,scale/token,token/sequence,sequence/block1,
		block1/block2,block2/cls,cls/head,head/prob,prob/average}
	\draw[flow] (\a) -- (\b);
	
	\node[operation] (hin) at (10.7,-0.9) {Input $H\in\mathbb R^{3\times64}$};
	\coordinate (branch1) at (10.7,-1.5);
	\node[operation] (norm1) at (10.7,-2.0) {LayerNorm};
	\node[operation,fill=blue!7] (attention) at (10.7,-3.1)
	{Multi-head self-attention\\2 heads; head dimension 32};
	\node[operation] (drop1) at (10.7,-4.1) {Dropout $(0.1)$};
	\node[sum] (add1) at (10.7,-4.95) {$+$};
	\coordinate (branch2) at (10.7,-5.5);
	\node[operation] (norm2) at (10.7,-6.0) {LayerNorm};
	\node[operation,fill=blue!7,font=\footnotesize] (ffn) at (10.7,-7.25)
	{Token-wise feed-forward\\
		Linear $(64\to128)$; GELU\\
		Dropout $(0.1)$; Linear $(128\to64)$};
	\node[operation] (drop2) at (10.7,-8.5) {Dropout $(0.1)$};
	\node[sum] (add2) at (10.7,-9.35) {$+$};
	\node[operation] (hout) at (10.7,-10.3) {Output $H'\in\mathbb R^{3\times64}$};
	\foreach \a/\b in {hin/norm1,norm1/attention,attention/drop1,drop1/add1,
		add1/norm2,norm2/ffn,ffn/drop2,drop2/add2,add2/hout}
	\draw[flow] (\a) -- (\b);
	\draw[flow] (branch1) -- (7.7,-1.5) |- (add1.west);
	\draw[flow] (branch2) -- (7.7,-5.5) |- (add2.west);
	\fill (branch1) circle (1.3pt);
	\fill (branch2) circle (1.3pt);
	\node[rotate=90,font=\footnotesize,fill=white,inner sep=2pt] at (7.7,-3.1)
	{residual connection};
	\node[rotate=90,font=\footnotesize,fill=white,inner sep=2pt] at (7.7,-7.35)
	{residual connection};
	\node[note] at (10.7,-11.55)
	{Same topology; distinct block parameters.\\
		Attention-weight dropout is also $0.1$.\\
		All dropout is disabled at prediction time.};
\end{tikzpicture}

\end{document}